\newcommand{\forces}{\mathrel\Vdash}
\DeclareMathOperator{\dom}{dom}
\DeclareMathOperator{\ran}{ran}
\DeclareMathOperator{\diam}{diam}
\def\MPB{{\mathbb{P}}}
\def\MRB{{\mathbb{R}}}
\def\MCB{{\mathbb{C}}}
\def\k{\kappa}
\def\lan{\langle}
\def\ran{\rangle}
\def\a{\alpha}
\def\om{\omega}
\def\ov{\overline}
\newcommand{\norm}[1]{\lVert#1\rVert}
\newcommand{\explicitSet}[1]{\left\lbrace #1 \right\rbrace}
\newcommand{\Set}[2]{\explicitSet{#1 \colon #2}}
\newcommand{\rest}{\!\restriction\!}
\newcommand{\tr}[1]{[\![#1]\!]}
\newtheorem{theorem}{Theorem}[section]
\newtheorem{lemma}[theorem]{Lemma}
\newtheorem{corollary}[theorem]{Corollary}
\newtheorem{remark}[theorem]{Remark}
\newtheorem{question}[theorem]{Question}
\numberwithin{equation}{section}
\def\lan{\langle}
\def\ran{\rangle}
\def\ov{\bar}
\def\rmark{\mbox{$\rm\bf\rule{0.06em}{1.45ex}\kern-0.05em R$}}
\def\pmark{\mbox{$\rm\bf\rule{0.06em}{1.45ex}\kern-0.05em P$}}
\def\nmark{\mbox{$\rm\bf\rule{0.06em}{1.45ex}\kern-0.05em N$}}
\def\vdash{\mbox{$\rm\| \kern-0.13em -$}}
\def\lan{\langle}
\def\ran{\rangle}
\def\ov{\bar}
\def\rmark{\mbox{$\rm\bf\rule{0.06em}{1.45ex}\kern-0.05em R$}}
\def\pmark{\mbox{$\rm\bf\rule{0.06em}{1.45ex}\kern-0.05em P$}}
\def\nmark{\mbox{$\rm\bf\rule{0.06em}{1.45ex}\kern-0.05em N$}}
\def\vdash{\mbox{$\rm\| \kern-0.13em -$}}
\newcommand{\lusim}[1]{\smash{\underset{\raisebox{1.2pt}[0cm][0cm]{$\sim$}}
{{#1}}}}
\begin{document}

\title[Combinatorial and number-theoretic properties of generic reals]{Combinatorial and number-theoretic properties of generic reals}

\author[W. Brian and M. Golshani ]{Will Brian and Mohammad
  Golshani }

\keywords{ Cohen and random reals, partition regular families, Mahler's classification of reals, Hausdorff dimension, o-minimality}

\thanks{The second author's research has been supported by a grant from IPM (No. 99030417).}

\thanks{The authors thank Alex Kruckman for bringing the Theorem of Friedman-Miller \ref{friedman-miller theorem}(a) to their attention.} \maketitle


\begin{abstract}
We discuss  some properties of Cohen and random reals. We show that they belong to any definable partition regular family, and hence they satisfy most ``largeness'' properties studied in Ramsey theory. We determine their position in the Mahler's classification of the reals and using it, we get some information about Liouville numbers.  We also show that they are wild in the sense of o-minimality, i.e., they define the set of integers.
\end{abstract}

\section{introduction}
In this paper we study Cohen and random reals, and ask questions about what sort of reals they are. Do they satisfy the conclusion of Hindman's theorem? How badly approximable are they by algebraic numbers?
What of the same questions, but with Cohen reals?
Are Cohen or random reals wild in the sense of o-minimality?

Let $\mathbb N$ denote the natural numbers (without $0$). We say that $\mathcal F$ is a \emph{semifilter} of infinite subsets of $\mathbb N$ if it is closed under taking supersets. Such collections are also sometimes called \emph{Furstenberg families}, or simply \emph{families}. A semifilter $\mathcal F$ is \emph{partition regular} if for every $A \in \mathcal F$, and every partition of $A$ into finite sets $A_1,A_2,\dots,A_n$, there is some $i \leq n$ such that $A_i \in \mathcal F$. Roughly, a partition regular semifilter is a collection of sets that can be considered not small. Such collections are also sometimes called \emph{co-ideals}.

For example, call $r \subseteq \mathbb N$ \emph{large} if $\Sigma_{n\in r}$$ 1\over n$$=\infty.$ A famous conjecture
of Erd\"{o}s--Tur\'{a}n says that if $r$ is large, then it contains arbitrary long arithmetic progressions. Both the semifilter of large sets and the semifilter of sets containing arbitrarily long arithmetic progressions are partition regular. (The second assertion is the famous theorem of van der Waerden). 

As it turns out, Cohen and random reals belong to essentially every partition regular family considered in classical Ramsey theory. Specifically, we prove the following:

\begin{theorem}\label{largeness}
Let $V \subseteq W$ be models of set theory, and let $r \in W$ be a Cohen (or random) real over $V$.
\begin{enumerate}
\item Suppose $W=V[r]$. In $W$, $r$ belongs to every partition regular family that is definable over $V$.
\item In general, $r$ belongs to every partition regular family that is lightface $\Sigma^1_2(a)$ or $\Pi^1_2(a)$-definable, where $a$ is a real number in $V$.
\end{enumerate}
\end{theorem}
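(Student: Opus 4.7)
The plan is to show that the Boolean value of ``$\dot r \in \mathcal{F}$'' in the Cohen (respectively random) completion is the top element, by applying a Baire-category (respectively measure-theoretic) $0$-$1$ law to a tail-invariant event.

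I begin with a combinatorial observation: any partition regular semifilter $\mathcal{F}$ of infinite subsets of $\mathbb{N}$ is closed under the relation $=^*$. For $A \in \mathcal{F}$ and finite $F \subseteq \mathbb{N}$, partition $A = (A \setminus F) \sqcup (A \cap F)$; partition regularity forces one piece into $\mathcal{F}$, and since $A \cap F$ is finite while $\mathcal{F}$ consists of infinite sets, this piece must be $A \setminus F$. For any $B$ with $A \triangle B$ finite we have $B \supseteq A \setminus F$, so $B \in \mathcal{F}$ by super-closure.

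Next, in the Cohen case for part (1), consider the Boolean value $B := \|\dot r \in \mathcal{F}\|$ in the Cohen completion. The completion is the algebra of Borel subsets of $2^{\mathbb{N}}$ modulo meager, so I fix a Borel representative of $B$ in $V$, characterized by the property $r \in B \Leftrightarrow V[r] \models r \in \mathcal{F}$ for every Cohen real $r$ over $V$. The key claim is that $B$ is tail-invariant in $V$, modulo meager: for each finite $F \in V$, the map $r \mapsto r \triangle F$ is a $V$-definable automorphism of Cohen forcing preserving genericity, and $V[r] = V[r \triangle F]$; applying the lemma inside this common extension gives $r \in \mathcal{F}^{V[r]} \Leftrightarrow r \triangle F \in \mathcal{F}^{V[r]}$, and since Cohen reals are comeager, $B$ and its image under $x \mapsto x \triangle F$ agree modulo meager. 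By the Baire-category $0$-$1$ law for tail-invariant Borel subsets of $2^{\mathbb{N}}$, $B$ is either meager or comeager. To rule out meager, I apply partition regularity to the partition $\mathbb{N} = r \sqcup r^c$ (both infinite by genericity of $r$): since $\mathbb{N} \in \mathcal{F}$, at least one of $r, r^c$ lies in $\mathcal{F}$, i.e., $r \in B$ or $r^c \in B$ for every Cohen $r$ over $V$. The map $x \mapsto x^c$ is a meager-preserving homeomorphism of $2^{\mathbb{N}}$, so if $B$ were meager then $\{x : x^c \in B\}$ would be too, and their union would be meager despite containing the comeager set of Cohen reals, a contradiction. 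Hence $B$ is comeager, every Cohen $r$ over $V$ lies in $B$, and so $r \in \mathcal{F}$.

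The random case runs in parallel, with category replaced by measure and the $0$-$1$ law by Kolmogorov's measure-theoretic version for tail events. For part (2), Shoenfield absoluteness ensures that the statements ``$x \in \mathcal{F}$'' and the instances of partition regularity used above are absolute between $V[r]$ and $W$ when $\mathcal{F}$ is $\Sigma^1_2(a)$ or $\Pi^1_2(a)$-definable with $a \in V$, so one carries out the part (1) argument inside $V[r]$ and transfers the conclusion back to $W$. I expect the main technical hurdle to be justifying the tail-invariance of $B$ cleanly when $\mathcal{F}$ is merely definable over $V$ and not Borel: this forces working with a Borel representative of the Boolean value rather than with $\mathcal{F}$ itself, and applying $=^*$-closure inside the extension $V[r]$, not inside $V$.
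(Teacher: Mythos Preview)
Your proof is correct and takes a genuinely different route from the paper's. The paper proceeds by building, for each condition $p$, a single explicit automorphism $\pi$ (bit-flipping outside a finite set; in the random case this requires the Lebesgue Density Theorem to find $q \leq p$ with $\pi(q)=q$) such that below $q$ the reals $\dot r$ and $\pi(\dot r)$ are almost complementary. Assuming some $p'$ forces $\dot r \notin \dot{\mathcal F}$, the automorphism gives $q \Vdash \dot r,\pi(\dot r)\notin\dot{\mathcal F}$, and partition regularity applied to $\omega = \dot r \cup \pi(\dot r)\cup F$ yields a contradiction.

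You instead separate the two symmetries the paper fuses into one automorphism. The $=^*$-closure observation (a nice isolation of exactly what partition regularity contributes) makes the Boolean value $B$ tail-invariant, and the topological or Kolmogorov $0$--$1$ law gives $B$ meager/null or comeager/conull; then the global complement map together with partition regularity on $\mathbb N = r \sqcup r^c$ rules out the trivial case. This is more modular and treats Cohen and random uniformly, avoiding the Lebesgue Density step. The paper's approach, on the other hand, is slightly more self-contained and makes the automorphism doing the work completely explicit. One small point you leave implicit: to apply $=^*$-closure in every $V[r']$ (not just the given $V[r]$) you need that ``$\dot{\mathcal F}$ is partition regular'' is forced outright, not merely by some $p$ in $G$; this follows from the same tail-invariance $0$--$1$ argument applied to that sentence (and the paper notes this as well). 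Part~(2) via Shoenfield is handled the same way in both proofs.
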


We will provide an example showing that the conclusion of $(1)$ does not hold under the hypotheses of $(2)$; in other words, some further restriction on the niceness of the partition regular families is really needed for $(2)$.


Then we consider Mahler's classification of the reals.
The set of real numbers splits into algebraic and transcendental numbers. Kurt Mahler in 1932 partitioned the real numbers into four classes, called $A, S, T,$ and $U$. Class $A$ consists of algebraic numbers, while classes $S, T$
and $U$ contain transcendental numbers. We determine the situation of Cohen and random reals by proving the following:
\begin{theorem} (\cite{dean})
\label{mahler}$\ $

$(a)$
Assume $r$ is a Cohen real over some (possibly countable) transitive model. Then $r$ is in the class $U$ (and indeed it is a Liouville number).

$(b)$ Assume $r$ is a random real over some (possibly countable) transitive model. Then $r$ is in the class $S$.
\end{theorem}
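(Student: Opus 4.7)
The plan is to reduce both parts to the interaction between the topological/measure-theoretic characterization of Cohen/random reals and classical facts from Diophantine approximation. In both cases, the key observation is that Mahler's classes are Borel subsets of $\mathbb{R}$ definable from parameters in $\omega$, hence coded in any transitive model $M$ containing $\omega$; one then applies the standard fact that a Cohen (resp.\ random) real over $M$ avoids every meager (resp.\ null) Borel set coded in $M$.

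For part (a), I would write the set of Liouville numbers as
\[
L \;=\; (\mathbb{R} \setminus \mathbb{Q}) \cap \bigcap_{n \geq 1}\; \bigcup_{q \geq 2}\; \bigcup_{p \in \mathbb{Z}} \left( \frac{p}{q} - \frac{1}{q^n},\, \frac{p}{q} + \frac{1}{q^n} \right).
\]
Each inner union is an open set containing every rational, hence open and dense in $\mathbb{R}$; intersecting with the comeager set $\mathbb{R} \setminus \mathbb{Q}$ still leaves a comeager $G_\delta$. It follows immediately that a Cohen real $r$ over $M$, viewed as an element of $\mathbb{R}$ through the standard identification, lies in $L$. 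Liouville's classical theorem on the irrationality measure of algebraic numbers shows that every element of $L$ is transcendental with $w_1(\xi) = \infty$, placing $r$ in Mahler's class $U$.

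For part (b), I would invoke Sprind\v{z}uk's theorem (settling Mahler's conjecture), which asserts that almost every real number $\xi$ satisfies $w_n(\xi) = n$ for every $n \geq 1$, and in particular is an $S$-number. Since the condition ``$\xi$ is an $S$-number'' unfolds to an arithmetical condition that quantifies over the countable set of integer polynomials, the set of $S$-numbers is Borel with a code in $M$. A random real $r$ over $M$ therefore avoids its null complement and lies in Mahler's class $S$.

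The main obstacle is the invocation of Sprind\v{z}uk's theorem, a deep result of Diophantine approximation; once it is accepted as a black box, the set-theoretic step is routine, amounting to verifying that the Borel code of the exceptional null set lives in the ground model. Part (a), by contrast, is entirely self-contained modulo the Baire-category behavior of Cohen reals and the elementary Liouville inequality for algebraic numbers, and in fact the proof exhibits the complementary phenomenon that $L$ is simultaneously comeager and Lebesgue-null, explaining exactly why Cohen and random reals land in disjoint Mahler classes.
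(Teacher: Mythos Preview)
Your proposal is correct, and for part~(b) it is essentially the paper's argument: the paper isolates a specific $G_\delta$ null set (due to Sprind\v{z}uk) containing all transcendental $\xi$ with $w^*(\xi)>1$, and then observes that a random real avoids it; you phrase the same step as ``the complement of the $S$-numbers is a null Borel set coded in $M$.'' The only thing you leave implicit, and which the paper also leaves implicit, is the lower bound $w(\xi)\ge 1$ for transcendental $\xi$ (Dirichlet), which is needed to rule out the $A$-class once the upper bound from Sprind\v{z}uk is in hand.

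For part~(a), however, you take a genuinely different route. The paper argues constructively: using the earlier largeness results (or a direct density argument), it finds for each $n$ some $m>n$ with $r\cap\{m,\dots,m^2\}=\emptyset$, and from this gap in the binary expansion it exhibits an explicit rational $\alpha=\sum_{i<m}r(i)2^{-(i+1)}$ with $H(\alpha)\le 2^m$ and $|r-\alpha|<2^{-m^2}$, yielding $w_1^*(r)=\infty$ by hand. Your argument instead observes once and for all that the Liouville numbers form a comeager $G_\delta$ with an arithmetical Borel code, and then invokes the meager-avoidance characterization of Cohen reals. Your approach is shorter and more conceptual, and it makes the category/measure dichotomy between (a) and (b) completely transparent (Liouville numbers are comeager and null); the paper's approach, on the other hand, is self-contained at the level of binary expansions and ties the result back to the combinatorial largeness theorems proved earlier, giving explicit approximants rather than an existence statement.
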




We show also that Cohen or random reals  are wild in the sense of o-minimality,
by proving the following.
\begin{theorem}
\label{o-minimality}
 Assume $r$ is a Cohen (a random) real over $V$. Then $r$ defines $\mathbb{Z}$ in the sense that the set of integers $\mathbb{Z}$ is definable in the structure
 $(\mathbb{R}^{V[r]}, +, \cdot, <, 0, 1, r)$, where $r$ is considered as a unary predicate.
\end{theorem}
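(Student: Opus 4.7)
The plan is to exhibit $\mathbb{Z}$ as the difference set $r - r = \{a - b : a, b \in r\}$, which is first-order definable from the predicate $r$ and the field operations.

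First, I would fix the interpretation: viewing $r$ as an element of $2^\omega$, the predicate $r(x)$ holds iff $x \in \omega$ and $r(x) = 1$, so the predicate picks out the subset $\{n \in \omega : r(n) = 1\} \subseteq \omega \subseteq \mathbb{R}$. Under this natural reading $r \subseteq \mathbb{Z}$, so $r - r \subseteq \mathbb{Z}$ is automatic, and the content lies in the reverse inclusion.

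Next, I would establish $\mathbb{Z} \subseteq r - r$ by the appropriate density or measure argument. For Cohen $r$: for each $k \in \mathbb{N}$ the set $D_k$ of conditions $p \in 2^{<\omega}$ containing a pair $p(n) = p(n+k) = 1$ is dense, since any condition extends to one in $D_k$ by appending two well-placed $1$'s; by Cohen genericity $r$ meets $D_k$, so $k \in r - r$. For random $r$: for each $k \in \mathbb{N}$ the events ``$r(2jk) = r(2jk+k) = 1$'' for $j = 0, 1, 2, \ldots$ involve pairwise disjoint coordinates and are therefore mutually independent, each of probability $1/4$; by the second Borel-Cantelli lemma, a random $x \in 2^\omega$ satisfies infinitely many of these events almost surely, and since the failure set is a $V$-coded null Borel set that $r$ avoids, we get $k \in r - r$. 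Taking the countable intersection over $k \in \mathbb{N}$ preserves the conclusion in each case, and the symmetry $k \in r-r \Leftrightarrow -k \in r-r$ handles negative $k$, yielding $\mathbb{Z} \subseteq r - r$.

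Finally, $\mathbb{Z} = r - r$ is then defined in $(\mathbb{R}^{V[r]}, +, \cdot, <, 0, 1, r)$ by the formula $\exists a \exists b\,(r(a) \wedge r(b) \wedge x = a - b)$. The main (and only) subtlety is the choice of interpretation of $r$ as a subset of $\omega$; once that is in place, the argument is elementary. The Friedman-Miller theorem acknowledged in the introduction presumably provides a more structural, o-minimality-theoretic explanation (showing the expansion is ``wild'' for general reasons), but the direct combinatorial argument above suffices to witness definability of $\mathbb{Z}$.
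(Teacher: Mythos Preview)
Your proof is correct and follows essentially the same approach as the paper: both show $\mathbb{Z} = r - r$ via a density argument for Cohen reals and a measure-one argument for random reals (the paper simply asserts each $A_m$ has measure one, while you spell out a Borel--Cantelli justification). One small correction to your closing remark: the Friedman--Miller theorem is invoked in the paper not to give an alternative proof of wildness, but for the \emph{opposite} purpose---to exhibit a different generic real (one contained in $2^{\mathbb{Z}}$) that does \emph{not} define $\mathbb{Z}$.
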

We also show that the above phenomenon is not true for all generic reals by providing an example of a generic real $r$ which does not define $\mathbb{Z}.$

Finally we consider the situation of adding many Cohen or random reals and extend a classical result by proving the following:
\begin{theorem}
	\label{adding many}
	Let $\kappa$ be an infinite cardinal.  Then, forcing with $\MRB(\kappa) \times \MRB(\kappa)$ adds a generic filter for $\MCB(\kappa),$
	where  $\MRB(\kappa)$ and $\MCB(\kappa)$ are the forcing notions for adding $\kappa$-many random reals and  adding $\kappa$-many Cohen reals respectively.
\end{theorem}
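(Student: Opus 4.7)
The plan is to show that in any generic extension of $V$ by $\MRB(\kappa) \times \MRB(\kappa)$, there is a $V$-generic filter for $\MCB(\kappa)$. My approach is to construct a complete Boolean embedding $\iota \colon \MCB(\kappa) \hookrightarrow B(\MRB(\kappa) \times \MRB(\kappa))$, where the target Boolean completion is naturally identified with the measure algebra of $2^\kappa \times 2^\kappa$ equipped with the product Haar measure (the product poset embeds densely into it as the subalgebra of rectangles). Any $\MRB(\kappa) \times \MRB(\kappa)$-generic filter then pulls back via $\iota$ to the desired $V$-generic filter for $\MCB(\kappa)$.

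To define $\iota$, view the generic pair as $(x, y) \in 2^\kappa \times 2^\kappa$. For each Cohen condition $p$ (a finite partial function $\kappa \to 2$), I would let $\iota(p)$ be a positive-measure Borel subset of $2^\kappa \times 2^\kappa$ that encodes $p$ via a coordinate-wise coupling of $x$ and $y$ on $\dom(p)$. Concretely, one fixes two subsets $S_0, S_1 \subseteq \{0,1\}^2$ of measure $\tfrac12$ each, chosen to depend nontrivially on both coordinates, and sets $\iota(p) = \{(x,y) : (x(\alpha), y(\alpha)) \in S_{p(\alpha)} \text{ for all } \alpha \in \dom(p)\}$. Order-preservation and incompatibility-preservation are then immediate from the product structure: $\iota(p) \cap \iota(q) = \iota(p \cup q)$ (modulo null) whenever $p,q$ are compatible in $\MCB(\kappa)$, and the intersection is null whenever $p,q$ are incompatible.

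The main obstacle is verifying that $\iota$ is \emph{complete}, i.e., that for every maximal antichain $A \subseteq \MCB(\kappa)$ with $A \in V$, the union $\bigcup_{p \in A} \iota(p)$ has full product measure in $2^\kappa \times 2^\kappa$. This step is delicate: a one-coordinate encoding would embed $\MCB(\kappa)$ completely into a single random algebra, contradicting the classical fact that random forcing adds no Cohen real, so the coupling between the two coordinates is essential. I would verify completeness via a Fubini-style argument leveraging the mutual independence of the two random coordinates, reducing the supremum computation to the coordinate-wise maximality of $A$ in the finite Boolean algebras on $2^{\dom(p)}$. This, paired with the classical case as a building block, should yield the theorem uniformly for all infinite $\kappa$.
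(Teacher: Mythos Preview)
Your identification of the Boolean completion of $\MRB(\kappa)\times\MRB(\kappa)$ with the measure algebra of $2^\kappa\times 2^\kappa$ is incorrect, and this is a fatal gap. The measure algebra of the product space is isomorphic to $\MRB(\kappa)$ itself (since $2^\kappa\times 2^\kappa\cong 2^\kappa$ as measure spaces), and a single random algebra adds no Cohen real; so if rectangles really were dense in it, the theorem you are trying to prove would be false. Concretely, rectangles are \emph{not} dense: take $F\subseteq 2^\omega$ closed, nowhere dense, of positive measure, and set $B=\{(x,y):x+y\in F\}$. Then $\mu(B)=\mu(F)>0$, but no rectangle $S\times T$ with $\mu(S),\mu(T)>0$ lies below $B$ mod null, since by the Steinhaus lemma $S+T$ would contain a nonempty open set while $F$ is nowhere dense. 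The same observation kills your proposed Fubini-style verification of completeness: for any maximal antichain $A\subseteq\MCB(\kappa)$ whose associated open set $\bigcup_{p\in A}\tr{p}$ has measure strictly less than $1$ (such antichains certainly exist), the set $\bigcup_{p\in A}\iota(p)$ has product measure strictly less than $1$, so the supremum you want to compute is not $1$ in the product measure algebra.

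What must actually be shown is that for every genuine \emph{rectangle} $([S],[T])$ and every dense open $D\subseteq\MCB(\kappa)$ there is a stronger rectangle forcing the coordinatewise sum into $D$; this is a statement about the product poset, not about the product measure algebra, and the two are inequivalent. The paper establishes exactly this, and the key tool you are missing is the Steinhaus lemma for $2^{\kappa\times\omega}$: given $([S],[T])$, the sumset $S+T$ contains some basic open $\tr{p}$; one then extends $p$ to $q\in D$ and, applying the Steinhaus-type reasoning once more, locates non-null $\bar S\subseteq S$ and $\bar T\subseteq T$ such that $([\bar S],[\bar T])$ forces the sum sequence to extend $q$. No amount of measure-theoretic bookkeeping in the product measure algebra can substitute for this step.
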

\section{Cohen and random forcing}

In this section, we briefly review the definition and basic properties of the Cohen and random forcing notions.



\subsection{Cohen forcing} The Cohen forcing, denoted  $\MCB,$ is defined as
\[
\MCB=\{p: p\text{~is a function~}n \to 2 \text{ for some } n \in \omega        \},
\]
ordered by reverse inclusion. Suppose $G$ is $\MCB$-generic over $V$ and let $F=\bigcup_{p \in G}p.$ Then $F$ is a function from $\omega$
into $2$ and if we set $r=\{n \in \omega: F(n)=1       \}$, then $V[G]=V[r].$ The real $r$ is called the Cohen real. This real has a canonical $\mathbb C$-name $\dot r$, determined by
$$\norm{\dot r(n) = 1} = \sum \Set{p \in \mathbb C}{p(n)=1}.$$
The next lemma is a well-known characterization of Cohen reals which we apply repeatedly.
\begin{lemma}
\label{cohen characterization}
A real $r$ is Cohen over $V$ if and only if it does not belong to any meager Borel set coded in $V$.
\end{lemma}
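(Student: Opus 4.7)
The plan is to prove both implications through the standard correspondence between Cohen conditions $p \in \mathbb{C}$ and basic clopen cylinders $[p] = \{x \in 2^\omega : p \subseteq x\}$ in $2^\omega$: dense subsets of $\mathbb{C}$ correspond to dense open subsets of $2^\omega$, and a $V$-generic filter $G$ produces a point $r$ with $G = \{p : p \subseteq r\}$, and conversely.

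For the forward direction, I would suppose $r$ is Cohen over $V$ and let $B \in V$ be a meager Borel set. Working in $V$, I would first replace the pieces of any meager decomposition of $B$ by their closures, obtaining $B \subseteq \bigcup_n F_n$ with each $F_n$ closed and nowhere dense. For each $n$ the set $D_n = \{p \in \mathbb{C} : [p] \cap F_n = \emptyset\}$ lies in $V$ and is dense in $\mathbb{C}$: given any $q$, since $F_n$ has empty interior, the clopen set $[q]$ is not contained in $F_n$, so some $q' \supseteq q$ satisfies $[q'] \cap F_n = \emptyset$. Genericity then yields $p_n \in G \cap D_n$, whence $r \in [p_n]$ and $r \notin F_n$ for every $n$, so $r \notin B$.

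For the converse, I would suppose $r$ avoids every meager Borel set coded in $V$, and let $D \in V$ be an arbitrary dense subset of $\mathbb{C}$. The set $U = \bigcup_{p \in D} [p]$ is dense open in $2^\omega$, so $2^\omega \setminus U$ is closed and nowhere dense, hence a meager Borel set coded in $V$. By assumption $r \in U$, which produces some $p \in D$ with $p \subseteq r$. Since $D$ was arbitrary, the filter $\{p \in \mathbb{C} : p \subseteq r\}$ meets every dense $D \in V$ and is therefore $V$-generic.

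The main obstacle I anticipate is not mathematical but foundational: one needs a little bookkeeping about absoluteness, so that the phrase ``meager Borel set coded in $V$'' has the same meaning in $V$ and in the outer model containing $r$. In particular, a Borel code $c \in V$ should decode to a meager set in $V$ exactly when it does in the outer model. This is standard ($\Pi^1_1$-absoluteness of meagerness of a Borel set suffices, or one may argue directly via the correspondence between closed nowhere dense sets in $V$ and dense sets of conditions in $V$). Once this is granted, both directions become essentially formal from the paired-off correspondence between dense subsets of $\mathbb{C}$ and dense open subsets of $2^\omega$.
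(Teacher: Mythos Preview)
Your argument is correct and is precisely the standard proof of this characterization. The paper itself does not prove this lemma; it simply states it as a well-known fact (``The next lemma is a well-known characterization of Cohen reals which we apply repeatedly''), so there is nothing to compare against, and your write-up would serve as a perfectly adequate justification.
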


 In general, given a non-empty set $I$, the forcing notion $\MCB(I)$, the Cohen forcing for adding $|I|$-many Cohen
 reals is defined by
 \begin{center}
 	$\MCB(I)=\{p: I \times \omega \to 2: |p|< \aleph_0                      \}$,
 \end{center}
 which is ordered by reverse inclusion.
 \begin{lemma}
 	\label{chain condition lemma for cohen}
 	$\MCB(I)$ is c.c.c.
 \end{lemma}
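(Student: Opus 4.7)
The plan is to carry out the classical $\Delta$-system argument for finite-support product forcing. Suppose, aiming for a contradiction, that $\{p_\alpha : \alpha < \omega_1\}$ is an uncountable antichain in $\MCB(I)$. Each $p_\alpha$ is a finite partial function from $I \times \omega$ to $2$, so its domain $d_\alpha = \dom(p_\alpha)$ is a finite subset of $I \times \omega$.

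First I would apply the $\Delta$-system lemma to the family $\{d_\alpha : \alpha < \omega_1\}$ of finite sets. This yields an uncountable $X \subseteq \omega_1$ and a finite root $R \subseteq I \times \omega$ such that $d_\alpha \cap d_\beta = R$ whenever $\alpha, \beta \in X$ are distinct. Next I would observe that the set of functions $R \to 2$ is finite (of cardinality $2^{|R|}$), so by the pigeonhole principle there is an uncountable $Y \subseteq X$ on which $\alpha \mapsto p_\alpha \restricted R$ is constant; call the common value $q$.

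Finally, for any distinct $\alpha, \beta \in Y$, the conditions $p_\alpha$ and $p_\beta$ agree on their intersection $d_\alpha \cap d_\beta = R$ (both restrict to $q$ there), so $p_\alpha \cup p_\beta$ is a well-defined finite partial function from $I \times \omega$ to $2$, hence a condition in $\MCB(I)$ extending both $p_\alpha$ and $p_\beta$. Thus $p_\alpha$ and $p_\beta$ are compatible, contradicting the antichain assumption.

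There is no genuine obstacle here; the only input beyond bookkeeping is the $\Delta$-system lemma, which applies precisely because the conditions have finite domains and we are trying to rule out an antichain of size $\aleph_1$. The argument is completely uniform in $|I|$, so it gives the c.c.c.\ for products of any size.
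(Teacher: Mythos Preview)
Your argument is correct and is exactly the standard $\Delta$-system proof of the c.c.c.\ for finite-support products. The paper itself does not supply a proof of this lemma at all; it is stated as a well-known fact, so there is nothing further to compare.
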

 Assume $G$ is $\MCB(I)$-generic over $V$, and set $F=\bigcup G: I \times \omega \to 2.$ For each $i \in I$ define $c_i: \omega \to 2$
 by $c_i(n)=F(i, n).$ Then
 	for  each $i \in I, c_i \in 2^\omega$
 	is a new real and for $i \neq j$ in $I, c_i \neq c_j$. Furthermore, $V[G]=V[\langle  c_i: i \in I    \rangle].$
 By $\kappa$-Cohen reals over $V$, we mean a sequence  $\langle c_i: i< \kappa  \rangle $
 which is $\MCB(\kappa)$-generic over $V$.

\subsection{Random forcing} There are several equivalent ways to present the random forcing. Here, we consider the following presentation.
Consider the product measure space $2^{\omega}$ with the standard product measure $\mu$ on it.  Let $\mathbb{B}$ denote the class of Borel subsets of $2^{\omega}$.

Note that sets of the form
\[
\tr{s}=\{x \in 2^{ \om}: x \upharpoonright n=s              \},
\]
where  $s: n \to 2 $ is a finite partial function
form a basis of open sets of $2^{ \om}.$

For Borel sets $S, T \in \mathbb{B}$ set
\[
S \sim T \iff S \bigtriangleup T \text{~is null,~}
\]
where $S \bigtriangleup T$ denotes the symmetric difference of $S$ and $T$. The relation $\sim$ is easily seen to be an equivalence relation
on $\mathbb{B}.$
Then $\MRB$, the random forcing, is defined as
\[
\MRB= \mathbb{B}/ \sim.
\]
Thus elements of $\MRB$ are equivalence classes $[B]$ of Borel sets modulo null sets. The order relation is defined by
\[
[S] \leq [T] \iff \mu(S \setminus T) =0.
\]
Let $\dot r$ be an $\MRB$-name for a real such that
$$\norm{\dot r(n) = 1} = \bigcup \Set{\tr{s}}{s(n) = 1}.$$
This real is called the random real, and this name is its canonical name.

The next lemma gives a characterization of random reals analogous to Lemma~\ref{cohen characterization}.
\begin{lemma}
\label{random characterization}
A real $r$ is random over $V$ if and only if it does not
belong to any null Borel set coded in $V$.
\end{lemma}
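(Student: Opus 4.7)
The plan is to prove both directions of the equivalence by exploiting the canonical name $\dot r$ defined immediately before the lemma. The key algebraic fact that drives everything is that the map $B \mapsto [B]$ from Borel sets coded in $V$ into $\MRB$ is a $\sigma$-complete Boolean homomorphism (modulo null sets), and the canonical $\dot r$ is defined precisely so that $\norm{\dot r \in B} = [B]$ for every Borel set $B$ coded in $V$. I would establish this identification first, by induction on the Borel rank of the code: for basic clopen sets $\tr{s}$ it is built into the definition of $\dot r$, and the inductive step handles countable unions and complements using that Boolean operations in $\MRB$ commute with their set-theoretic counterparts modulo null sets. Throughout, one silently invokes absoluteness of Borel codes and of the $\Pi^1_1$ statement ``this Borel code names a null set'' between $V$ and the ambient model $W$.

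For the forward direction $(\Rightarrow)$, suppose $r = \dot r^G$ for some $V$-generic $G \subseteq \MRB$ and let $N$ be a null Borel set coded in $V$. By the preceding identification, $\norm{\dot r \in N} = [N] = \mathbf{0}$ in $\MRB$, so $\dot r \in N$ is forced false and $r \notin N$ in $V[G]$. For the reverse direction $(\Leftarrow)$, suppose $r \in W$ avoids every null Borel set coded in $V$. Define
\[
G_r = \{[B] \in \MRB^V : r \in B\}.
\]
I would verify $G_r$ is a filter: upward closure follows because if $[B] \le [B']$ then $B \setminus B'$ is null and coded in $V$, so $r \in B$ forces $r \in B'$; downward directedness follows because if $r \in B_1 \cap B_2$ then $B_1 \cap B_2$ cannot be null (else $r$ would belong to a null Borel set coded in $V$), so $[B_1 \cap B_2]$ is a genuine condition lying in $G_r$ below both. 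For genericity, given a dense open $D \in V$, I work inside $V$: by the c.c.c.\ of $\MRB$ (Lemma analogous to \ref{chain condition lemma for cohen}), pick a countable maximal antichain $A = \{[B_n] : n \in \omega\} \subseteq D$ with $A \in V$; by maximality $2^\omega \setminus \bigcup_n B_n$ is null and coded in $V$, so $r \in B_n$ for some $n$, giving $[B_n] \in G_r \cap D$. Finally, unwinding the definition of $\dot r$ shows $\dot r^{G_r}(n) = 1 \iff \bigcup\{\tr{s} : s(n)=1\} \in G_r \iff r(n) = 1$, so $r = \dot r^{G_r}$ is random over $V$.

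The main obstacle is really the setup step, namely justifying $\norm{\dot r \in B} = [B]$ for arbitrary Borel $B$ coded in $V$ and handling the absoluteness subtleties: ``$B$ is null'' must hold both in $V$ (to know $[B] = \mathbf{0}$ in $\MRB^V$) and in $W$ (to know $r$ avoids $B^W$). Everything else is routine once the c.c.c.\ of $\MRB$ and the Borel/Boolean correspondence are available; these two ingredients are precisely the analogues of meager category and $\MCB$ that make Lemma \ref{cohen characterization} work, so the proof structure is a direct measure-theoretic transcription of the Cohen case.
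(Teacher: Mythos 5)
The paper states this lemma without proof, treating it as a standard fact, so there is no argument of the authors' to compare against; your proposal is the canonical textbook proof (identify $\norm{\dot r \in B}$ with $[B]$ by induction on Borel rank, then read off both directions, using c.c.c.\ and maximal antichains for genericity of $G_r$) and it is correct. The only step you leave implicit is that $G_r$ is well-defined on equivalence classes --- if $[B]=[B']$ then $B \bigtriangleup B'$ is a null Borel set coded in $V$, so $r\in B \iff r\in B'$ --- but this follows by exactly the same reasoning as your upward-closure verification.
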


We think of $2^\omega$ as the end space of the tree $2^{<\omega}$. If $T$ is any subtree of $2^{<\omega}$, then $T$ defines a closed subset of $2^\omega$, namely
$$\tr{T} = \bigcap_{n \in \omega}\bigcup \Set{\tr{s}}{s \in T \text{ and } \dom(s) = n}.$$
Conversely, if $C$ is a closed subset of $2^\omega$, then there is a subtree $T$ of $2^{<\omega}$ with $\tr{T} = C$, namely
$$T = \Set{s \in 2^{<\omega}}{\tr{s} \cap C \neq \emptyset}.$$
\begin{lemma}
\label{random characterization 2}
Let $C \subseteq 2^\omega$ be closed and non-null, and let $T$ be the subtree of $2^{<\omega}$ with $C = \tr{T}$. Then
$[C] \forces \dot r \rest n \in T$
for every $n \in \mathbb N$.
\end{lemma}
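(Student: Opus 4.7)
The plan is to unwind the definitions: the name $\dot r$ has been set up so that $\norm{\dot r \rest n = s}$ is exactly $[\tr{s}]$ for any $s \in 2^{<\omega}$ of length $n$, and the definition of $T$ says exactly that $\tr{s}$ is disjoint from $C$ precisely when $s \notin T$. Combining these two observations, the condition $[C]$ forbids $\dot r \rest n$ from taking any value outside $T$.

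More concretely, first I would verify that $\norm{\dot r \rest n = s} = [\tr{s}]$ whenever $\dom(s) = n$. This follows by induction on $n$ (or directly) from the definition $\norm{\dot r(k)=1} = \bigcup\Set{\tr{t}}{t(k)=1}$ and the dual identity for $\norm{\dot r(k)=0}$: taking the meet over $k < n$ of $\norm{\dot r(k) = s(k)}$ leaves precisely the equivalence class of those $x \in 2^\omega$ with $x \rest n = s$, which is $[\tr{s}]$.

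Second, fix $n \in \mathbb N$ and any $s \in 2^n$ with $s \notin T$. By the definition of $T$ given just before the lemma, $s \notin T$ means $\tr{s} \cap C = \emptyset$. Hence in $\MRB$ we have $[C] \cdot [\tr{s}] = [\tr{s} \cap C] = 0$, which is to say $[C] \forces \dot r \rest n \neq s$.

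Finally, since $\dot r \rest n$ is forced to be some element of $2^n$, and $[C]$ excludes every $s \in 2^n \setminus T$, we conclude $[C] \forces \dot r \rest n \in T \cap 2^n \subseteq T$. There is no real obstacle here; the only mildly delicate point is the computation in the first step matching the Boolean value $\norm{\dot r \rest n = s}$ with the basic clopen class $[\tr{s}]$, and this is immediate from the way $\dot r$ was defined.
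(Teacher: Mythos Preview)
Your argument is correct: the computation $\norm{\dot r \rest n = s} = [\tr{s}]$ together with the observation that $s \notin T$ implies $\tr{s} \cap C = \emptyset$ is exactly what is needed. The paper itself states this lemma without proof, treating it as a routine consequence of the definitions; your write-up simply makes that routine explicit.
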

As in the case of Cohen forcing, we can generalize the above construction to add many random reals. Suppose $I$ is a non-empty set and consider the product measure space $2^{I \times \omega}$ with the standard product measure $\mu_I$ on it.  Let $\mathbb{B}(I)$ denote the class of Borel subsets of $2^{I \times \omega}$.
Recall that $\mathbb{B}(I)$ is the $\sigma$-algebra generated by the basic open sets
\[
\tr{s}=\{ x \in  2^{I \times \omega}: x \supseteq s             \},
\]
where $s \in \MCB(I)$. Also $\mu_I(\tr{s})= 2^{-|s|}$.

For Borel sets $S, T \in \mathbb{B}(I)$ define $S \sim T $ as above.
Then $\MRB(I)$, the forcing for adding $|I|$-many random reals, is defined as

\[
\MRB(I)= \mathbb{B}(I) / \sim.
\]
The following fact is standard.
\begin{lemma}
	\label{chain condition lemma for random}
	$\MRB(I)$ is c.c.c. for any $I$.
\end{lemma}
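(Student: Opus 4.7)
The plan is to give the standard measure-theoretic proof that a probability measure algebra has the countable chain condition, noting that nothing about this argument depends on the size of the index set $I$ beyond the fact that $\mu_I$ is a probability measure.

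Suppose toward a contradiction that $\{[B_\alpha]\}_{\alpha < \omega_1}$ is an uncountable antichain in $\MRB(I)$, so that each $B_\alpha \in \mathbb{B}(I)$ has $\mu_I(B_\alpha) > 0$ and $\mu_I(B_\alpha \cap B_\beta) = 0$ whenever $\alpha \neq \beta$ (the latter being precisely the statement that $[B_\alpha] \cdot [B_\beta] = [\emptyset]$). For each $\alpha < \omega_1$ choose $n_\alpha \in \mathbb{N}$ with $\mu_I(B_\alpha) > 1/n_\alpha$. By the pigeonhole principle there exists $n \in \mathbb{N}$ and an uncountable $S \subseteq \omega_1$ such that $\mu_I(B_\alpha) > 1/n$ for every $\alpha \in S$.

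Now pick any finite set $F \subseteq S$ of size $n+1$. Since the $B_\alpha$ for $\alpha \in F$ are pairwise almost disjoint, countable additivity of $\mu_I$ gives
\[
\mu_I\!\left(\bigcup_{\alpha \in F} B_\alpha\right) = \sum_{\alpha \in F} \mu_I(B_\alpha) > \frac{|F|}{n} = \frac{n+1}{n} > 1.
\]
This contradicts $\mu_I(2^{I \times \omega}) = 1$, so no such antichain exists.

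There is essentially no obstacle here: the argument reduces the chain condition to the additivity of a probability measure, and the size of $I$ plays no role because $\mu_I$ is always a probability measure on $2^{I \times \omega}$. The only small point to keep in mind is that incompatibility of $[B_\alpha]$ and $[B_\beta]$ in the Boolean algebra $\mathbb{B}(I)/\!\sim$ translates to $\mu_I(B_\alpha \cap B_\beta) = 0$ rather than literal disjointness, but this is exactly what the measure-theoretic estimate requires.
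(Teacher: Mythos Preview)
Your proof is correct: this is the standard argument that any probability measure algebra satisfies the countable chain condition, and nothing in it depends on the cardinality of $I$. The paper does not actually supply a proof of this lemma; it simply states the result as standard, so your write-up fills in exactly the routine verification the authors omitted.
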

Using the above lemma, we can easily show that $\MRB(I)$ is in fact a complete Boolean algebra.
Given any $i \in I$, let $\dot{r}_i$ be an $\MRB(I)$-name for a real such that
$$\norm{\dot{r}_i(n) = 1} = \bigcup \Set{\tr{s}}{s(i, n) = 1}.$$
Given an $\MRB(I)$-generic filter $G$ over $V$, and $i \in I$, set $r_i = \lusim{r}_i[G].$ Then each $r_i \in 2^\omega$
	is a new real and for $i \neq j$ in $I, r_i \neq r_j$. Furthermore, $V[G]=V[\langle  r_i: i \in I    \rangle].$
By $\kappa$-random reals over $V$, we mean a sequence  $\langle r_i: i< \kappa  \rangle $
which is $\MRB(\kappa)$-generic over $V$.

\section{Largeness properties of Cohen and random reals}

In this section we prove Theorem \ref{largeness}. The proof of part $(1)$ capitalizes on the existence of certain automorphisms of the Cohen and random forcing notions (and then $(2)$ follows from $(1)$ by absoluteness). We begin by establishing the existence of the relevant automorphisms.

\begin{lemma}
Let $\mathbb P$ denote either the forcing for adding a Cohen real or a random real, and let $\dot r$ denote the canonical name for the generic real. Then, for any $p \in \mathbb P$, there is an automorphism $\pi: \mathbb P \rightarrow \mathbb P$ and some $q \leq p$ such that
\begin{itemize}
\item $\pi(q) = q$.
\item $q \forces \dot r \cap \pi(\dot r) \text{ is finite}$.
\item For some finite set $F$, $q \forces \dot r \cup \pi(\dot r) \cup F=\omega$.
\end{itemize}
\end{lemma}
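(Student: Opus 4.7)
My plan is to define $\pi$ via the bit-flipping involution $\sigma_n \colon 2^\omega \to 2^\omega$, where for a chosen $n \in \omega$, $\sigma_n(x)(k) = x(k)$ if $k < n$ and $\sigma_n(x)(k) = 1 - x(k)$ if $k \geq n$. For Cohen forcing $\MCB$, this induces the automorphism with $\dom(\pi(q)) = \dom(q)$ and $\pi(q)(k)$ given by the same formula; for random forcing $\MRB$, $\pi([B]) = [\sigma_n(B)]$, which is well-defined because $\sigma_n$ is a measure-preserving involution of $2^\omega$. In either case $\pi$ is an order-automorphism of $\mathbb P$. A direct computation with the canonical names for $\dot r$ (chasing conditions of the form $\tr{s}$ through $\pi$) shows that, forced by the trivial condition,
\[
\pi(\dot r)(k) = \dot r(k) \text{ for } k < n, \qquad \pi(\dot r)(k) = 1 - \dot r(k) \text{ for } k \geq n.
\]
Consequently $\dot r \cap \pi(\dot r) \subseteq [0, n)$ and $\omega \setminus (\dot r \cup \pi(\dot r)) \subseteq [0, n)$, which immediately gives the second and third bullets with $F = [0, n)$. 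The whole problem then reduces to exhibiting, for an appropriate $n$, some $q \leq p$ with $\pi(q) = q$.

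For Cohen, given $p$ with $\dom(p) = n$, simply take $q = p$ and use the corresponding $\sigma_n$: since $\pi$ only alters coordinates $\geq n$ and $p$ has none in its domain, $\pi(p) = p$ automatically.

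For random, given $p = [B]$ with $\mu(B) > 0$, use that the cylinders $\tr{s}$ form a basis for the topology to pick $s \colon n \to 2$ such that $\mu(B \cap \tr{s}) > \tfrac{1}{2}\mu(\tr{s})$, and define $\sigma_n$ with this $n$. The key observation is that, because $\sigma_n$ acts as the identity on the first $n$ coordinates, it fixes $\tr{s}$ setwise. Set
\[
q = [B \cap \sigma_n(B) \cap \tr{s}].
\]
Then $\sigma_n\!\left( B \cap \sigma_n(B) \cap \tr{s} \right) = \sigma_n(B) \cap B \cap \tr{s}$, so $\pi(q) = q$; the inequality $q \leq p$ is clear as the underlying Borel set lies inside $B$; and positivity of measure follows because $B \cap \tr{s}$ and $\sigma_n(B \cap \tr{s}) = \sigma_n(B) \cap \tr{s}$ both sit inside $\tr{s}$ with measure exceeding $\tfrac{1}{2}\mu(\tr{s})$, so their intersection is non-null. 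The principal difficulty is this last step: ensuring simultaneously that $q$ is $\pi$-fixed and has positive measure, which is handled by the density choice for $\tr{s}$; everything else reduces to routine computation with the canonical names.
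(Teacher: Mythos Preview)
Your proof is correct and follows the same overall strategy as the paper: bit-flipping outside a chosen finite domain, with a Lebesgue-density argument in the random case to locate a cylinder $\tr{s}$ in which $B$ has relative measure $> 1/2$, and then $q = [B \cap \sigma_n(B) \cap \tr{s}]$ (the paper's set $C$ is exactly this). The Cohen case is identical to the paper's.

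The one difference is in the random case: you take $\pi$ to be induced by the \emph{global} measure-preserving involution $\sigma_n$, whereas the paper defines a piecewise map $\pi_0$ equal to $\gamma$ (your $\sigma_n$) on $C$ and to the identity off $C$, and then must verify that this piecewise map is a measure-preserving bijection. Your choice is cleaner: it gives $\pi(\dot r)(k) = 1 - \dot r(k)$ for $k \ge n$ already at the level of Boolean values (forced by $1_{\mathbb P}$), so the last two bullets follow immediately for any $q$, and the only work left is producing a $\pi$-fixed $q \le p$ of positive measure. The paper's version buys nothing extra here; your streamlining is a genuine, if minor, improvement.
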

\begin{proof}
For the Cohen forcing the proof is straightforward. Let $p \in \mathbb C$ and let $\pi: \mathbb C \rightarrow \mathbb C$ be the map defined by
\begin{itemize}
\item $\pi(s)(n) = s(n)$ for all $n \in \dom(p)$
\item $\pi(s)(n) = 1-s(n)$ for all $n \notin \dom(p)$.
\end{itemize}
Let $q=p$. It is clear that $\pi(q) = q$, and if $\dot r$ is the canonical name for the generic real, then for all $n \notin \dom(q)$,
$$\forces_{\mathbb P} \ \dot r(n) = 1 \ \Leftrightarrow \ \pi(\dot r)(n) = 0$$
which in particular implies that $q \forces \dot r \cap \pi(\dot r) \subseteq \dom(q)$ and $r \cup \pi(\dot r) \cup \dom(q) = \omega$.

For the random forcing, let $p = [B] \in \mathbb R$. By the Lebesgue Density Theorem (see Exercise 17.9 in \cite{Kechris}), there is some $s \in 2^{<\omega}$ such that
$$\mu(B \cap \tr{s}) > \frac{1}{2}\mu(\tr{s}).$$

Define a map $\gamma: 2^{\omega} \rightarrow 2^{\omega}$ as follows:
\[
\gamma(x)(n) =
\begin{cases}
x(n) & \text{if } n \in \dom(s) \\
1 - x(n) & \text{otherwise.}
\end{cases}
\]
Observe that $\gamma$ is a homeomorphism from $2^\omega$ to itself; in particular, it maps Borel sets to Borel sets. Also observe that $\mu(\gamma(\tr{t})) = \mu(\tr{t})$ for every basic open set $\tr{t}$. It follows that $\gamma$ is measure-preserving: i.e., $\mu(A) = \mu(\gamma(A))$ for every measurable set $A$. Also notice that $\gamma(\tr{s}) = \tr{s}$.

Let $C = (\tr{s} \cap B) \cap \gamma(\tr{s} \cap B)$. By the observations at the end of the last paragraph, $C$ is Borel and
$$\mu(C) \geq \mu(\tr{s} \cap B) - \mu(\tr{s} - B),$$
and this is positive by our choice of $s$. Thus $[C] \in \mathbb R$, and clearly $[C] \leq [B]$. Let $q = [C]$.

Define $\pi_0: 2^\omega \rightarrow 2^\omega$ by
\[
\pi_0(x) =
\begin{cases}
\gamma(x) & \text{if } x \in C \\
x & \text{if } x \notin C.
\end{cases}
\]

First, note that $\pi_0$ maps Borel sets to Borel sets. Second, $\pi$ is measure-preserving:
$$\mu(\pi_0(A)) = \mu(\pi_0(A \cap C) \cup (A-C)) =  \mu(\gamma(A \cap C))+\mu(A-C) = \mu(A \cap C) +\mu(A-C)=\mu(A).$$
Third, $\pi_0$ is a bijection.
Fourth, $\pi_0$ is order-preserving on $\mathbb R$.

Therefore $\pi_0$ induces an automorphism $\pi$ of $\mathbb R$, namely $\pi([A]) = [\pi_0(A)]$. Let us check that it has the required properties. Because $\gamma \circ \gamma$ is the identity map, it follows from our definition of $C$ that $\gamma(C) = C$. Therefore $\pi_0(C) = C$ and $\pi(q) = q$. Furthermore, if $T$ is the subtree of $2^{<\omega}$ with $\tr{T} = C$, we have
$$q \forces \dot r \rest n \in T.$$
If $t \in T$ and $s \subseteq t$, then $\pi_0(\tr{t}) = \gamma(\tr{t}) = \tr{u}$, where
\[
u(n) =
\begin{cases}
t(n) & \text{if } n \in \dom(\sigma) \\
1 - t(n) & \text{otherwise.}
\end{cases}
\]
Thus, for all $n \notin \dom(s)$, we have
$$q \forces \dot r(n) \neq \pi(\dot r)(n).$$
It follows that
$$q \forces \ \dot r \cap \pi(\dot r) \subseteq \dom(s) \ \ \text{ and } \ \ \dot r \cup \pi(\dot r) \cup \dom(s) = \omega,$$
which completes the proof.
\end{proof}

\begin{proof}[Proof of Theorem~\ref{largeness}]

Let $\mathcal F$ be a partition regular semifilter in $W$ that is definable over $V$. In other words, there is a formula $\varphi(X,a)$, with $a \in V$, such that (in $W$) if $X \subseteq \mathbb N$ then
$$X \in \mathcal F \ \Leftrightarrow \ \varphi(X,a)$$

Assume $W = V[r]$ for some Cohen (random) real $r$. Let $\mathbb P$ denote the Cohen (random) forcing, so that $W$ is a $\mathbb P$-generic extension of $V$, and let $\dot r$ denote the canonical name for the generic real. Let $G$ be the generic filter on $\mathbb P$ with $\dot r^G = r$.

Let $\dot{\mathcal F}$ be the canonical $\mathbb P$-name for the semifilter $\mathcal F$:
$$\norm{\dot x \in \dot{\mathcal F}} = \norm{ \dot x \subseteq \mathbb N} \wedge \norm{\varphi(\dot x, a)}.$$
There is some $p \in G$ such that
$$p \forces \dot{\mathcal F} \text{ is a partition regular family}$$
(in fact, one may argue using automorphisms of $\mathbb P$ that if $p \forces (\dot{\mathcal F} \text{ is a partition regular family})$, then $\forces_{\mathbb P} (\dot{\mathcal F} \text{ is a partition regular semifilter})$. We do not need this, though, and content ourselves to work below $p$.)


Let us suppose $r \notin \mathcal F$ and aim for a contradiction. If $r \notin \mathcal F$, there is some $p' \leq p$, with $p' \in G$, such that $p' \forces \dot r \notin \dot{\mathcal F}$ or, equivalently,
$$p' \forces \neg \varphi(\dot r,a).$$

Applying our lemma, let $\pi$ be an automorphism of $\mathbb P$ and $q \leq p'$ such that
\begin{itemize}
\item $\pi(q) = q$.
\item $q \forces \dot r \cap \pi(\dot r) \text{ is finite}$.
\item $q \forces \dot r \cup \pi(\dot r) \cup F = \omega$ for some finite set $F$.
\end{itemize}

Because $q \leq p'$,
$$q \forces \dot{\mathcal F} \text{ is a partition regular family and } \neg \varphi(\dot r, a).$$
By well-known properties of automorphisms, we have
$$\pi(q) = q \forces \dot{\mathcal F} \text{ is a partition regular family and } \neg \varphi(\pi(\dot r), a).$$

Let $G'$ be a $\mathbb P$-generic filter containing $q$ (note that $q$ is not necessarily in $G$). Working in $W' = V[G']$, we will obtain a contradiction. By our choice of $\pi$ and the fact that $q \in G'$, we have $\omega  = (\dot r)^{G'} \cup (\pi(\dot r))^{G'} \cup F$ for some finite set $F$. Recalling our definition of partition regularity, one of $(\dot r)^{G'}$, $(\pi(\dot r))^{G'}$, and $F$ should be in $\mathcal F' = (\dot{\mathcal F})^{G'}$. Our definition of ``semifilter" requires that $\mathcal F'$ contain only infinite sets, so we must have either $(\dot r)^{G'}$ or $(\pi(\dot r))^{G'}$ in $\mathcal F'$. Because $q \in G'$ and $q \forces \neg \varphi(\dot r,a)$, we have $(\dot r)^{G'} \notin \mathcal F'$. Because $q \in G'$ and $q \forces \neg \varphi(\pi(\dot r),a)$, we have $(\pi(\dot r))^{G'} \notin \mathcal F'$. This contradiction establishes that $r \in \mathcal F$ and finishes the proof of $(1)$.

For $(2)$, let $r$ be Cohen (random) over $V$, and let $\varphi(x,a)$ be a $\Sigma^1_2(a)$ or $\Pi^1_2(a)$ formula defining $\mathcal F$ in $W$ (where $a$ is a real number in $V$). By Shoenfield's Absoluteness Theorem (which implies that such formulas are absolute), if $X \in V[r]$ then $W \models \varphi(X,a)$ if and only if $V[r] \models \varphi(X,a)$. Therefore $\varphi$ still defines a partition regular family in $V[r]$. By $(1)$, $V[r] \models \varphi(r,a)$. By Shoenfield's Absoluteness Theorem again, $W \models \varphi(r,a)$, and it follows that $r \in \mathcal F$.
\end{proof}

\begin{corollary}
Let $M \subseteq V$ be any transitive model of (enough of) ZFC. If $r$ is a Cohen or random real over $M$, then
\begin{enumerate}
\item $r$ is large.
\item $r$ contains arbitrary long arithmetic progressions.
\item $r$ contains arbitrary long geometric progressions.
\item $r$ contains infinitely many solutions to every partition regular Diophantine equation.
\item $r$ has positive upper density.
\item $r$ is piecewise syndetic, i.e.,  there are arbitrarily long intervals of $\mathbb {N}$  where the gaps in $r$ are bounded by some constant $b$.
\item $r$ satisfies the conclusion of Hindman's Theorem: i.e., there is some infinite $A \subseteq r$ such that, for every finite $F \subseteq A$, $\sum F \in r$.
\item $r$ is a $\Delta$-set: i.e., $r$ contains $\{s_j-s_i : i,j \in \mathbb N, i < j\}$ for some infinite sequence $\langle s_i : i \in \mathbb N \rangle$ of natural numbers.
\item $r$ is central: i.e., there is a minimal idempotent ultrafilter containing $r$.
\end{enumerate}
\end{corollary}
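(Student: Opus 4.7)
The plan is to derive each of the assertions (1)--(9) from Theorem \ref{largeness}(2), applied with ground model $M$. To each property I associate the semifilter $\mathcal F_i \subseteq \mathcal P(\mathbb N)$ of subsets of $\mathbb N$ satisfying that property---$\mathcal F_1$ the family of large sets, $\mathcal F_2$ the family of sets containing arbitrarily long arithmetic progressions, and so on, through $\mathcal F_9$ the family of central sets. Each $\mathcal F_i$ is visibly upward closed, so to invoke Theorem \ref{largeness}(2) I must verify two things for each $i$: (a) that $\mathcal F_i$ is partition regular, and (b) that $\mathcal F_i$ is lightface $\Sigma^1_2$- or $\Pi^1_2$-definable, so that no parameter outside $M$ is required.

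For (a), the partition regularity of each family is a classical result: (1) is immediate from the divergence of the harmonic series; (2) is van der Waerden's theorem; (3) follows from (2) by a logarithmic change of variables; (4) is Rado's theorem, applied separately to each partition regular equation; (5) is finite subadditivity of upper density; (6) is the well-known result that piecewise syndeticity is partition regular; (7) is Hindman's finite-sums theorem; (8) is a consequence of Ramsey's theorem for pairs; and (9) is the classical fact---established in the Stone--\v Cech compactification via the Ellis--Numakura lemma---that membership in some minimal idempotent ultrafilter is partition regular.

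For (b), the complexity is uniformly low. Items (1)--(6) are arithmetical: being large is $\Pi^0_2$; containing arbitrarily long arithmetic or geometric progressions is $\Pi^0_2$; containing infinitely many solutions to a fixed Diophantine equation is $\Pi^0_2$; having positive upper density is $\Sigma^0_3$; and being piecewise syndetic is $\Sigma^0_3$. Items (7) and (8) assert the existence of an infinite witnessing sequence and so are $\Sigma^1_1$. Item (9), although defined a priori through ultrafilters in $\beta\mathbb N$, admits a purely combinatorial reformulation (see Hindman--Strauss, \emph{Algebra in the Stone--\v Cech Compactification}) equating centrality of $A$ with the existence of a suitable tree of iterated finite sum-sets sitting inside $A$, which places $\mathcal F_9$ within $\Sigma^1_1$. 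With (a) and (b) in hand, Theorem \ref{largeness}(2) applies to each $\mathcal F_i$ and yields $r \in \mathcal F_i$.

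The principal obstacle is item (9): the other eight items are routine bookkeeping once the corresponding classical partition-regularity theorem is invoked, but for central sets one must either cite the combinatorial reformulation to bring the definition inside $\Sigma^1_2$, or else rerun the automorphism argument of Theorem \ref{largeness} directly for the $\beta\mathbb N$ definition, which requires more care because of the second-order nature of the objects being quantified over.
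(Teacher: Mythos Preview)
Your proposal is correct and follows essentially the same approach as the paper: verify that each of the nine families is partition regular and of sufficiently low descriptive complexity (well within $\Sigma^1_2$), then invoke Theorem~\ref{largeness}(2). The paper's proof is terser---it omits the partition-regularity citations and simply records that families (1)--(8) are analytic and that (9) is at worst $\Sigma^1_1$ via the combinatorial characterization in Hindman--Strauss---but the substance is identical.
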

\begin{proof}
It is not difficult to check that the various families described in $(1)$ - $(8)$ are all analytic (they are, at worst, $G_\delta$, $G_\delta$, $G_\delta$, $F_\sigma$, $F_\sigma$, $G_{\delta \sigma}$, $\Sigma^1_1$, and $\Sigma^1_1$, respectively). For $(9)$, observe that, though the central sets are often defined as those belonging to some minimal idempotent ultrafilter, there are alternative definitions (see, e.g., Theorem 14.25 in \cite{H&S}) that make it clear that the family of central sets is at worst $\Sigma^1_1$.
\end{proof}

Next, let us consider an example showing that the conclusions of $(1)$ do not hold for all extensions $W$ of $V$; in particular, some restriction on the definition of $\mathcal F$ is required.

Given $V$, let $r$ be a Cohen real over $V$, and let $W$ be the generic extension of $V[r]$ obtained by using a (set-sized) Easton forcing to get
$$n \in r \ \ \Leftrightarrow \ \ 2^{\aleph_n} = \aleph_{n+1}.$$
In $W$, we may define a partition regular semifilter $\mathcal F$ by the parameter-free formula
$$X \in \mathcal F \ \ \Leftrightarrow \ \ X \subseteq \mathbb N \text{ and }(\forall m \in \mathbb N)(\exists n \in X) \ n > m \ \wedge \  2^{\aleph_n} \neq \aleph_{n+1}.$$
In other words, $\mathcal F$ is the family of all sets that have infinite intersection with the complement of $r$. Clearly $\mathcal F$ is partition regular, but $r \notin \mathcal F$.

In fact,  one can show that
Theorem \ref{largeness}(2) can not be improved. To see this, assume $V=L[r],$ where $r$ is a Cohen or a random real over $L$. Then there is a generic extension $W$ of $L[r]$ in which there exists a $\Delta^1_3$-real
$s$ which codes $r$ \footnote{This can be done by selectively destroying the stationarity of certain canonical stationary subsets of $\omega_1$ in $L$ and coding these stationary kills by reals in a nice way. See \cite{friedman} and \cite{holy} for similar arguments and details.}. Then the above argument shows that there exists a partition regular semifilter $\mathcal F \in W$, definable in $W$ from the parameter $s$ (hence $\Delta^1_3$),
such that $r \notin \mathcal F$.

To end this section, let us recall that there is a duality between filters and partition regular semifilters.

\begin{lemma}
If $P$ is a filter then
$$\mathcal F = \Set{X \subseteq \mathbb N}{X \cap B \neq \emptyset \text{ for all }B \in P}.$$
is a partition regular semifilter.
\end{lemma}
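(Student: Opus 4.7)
The plan is to verify directly from the definitions that $\mathcal{F}$ satisfies both the semifilter property and partition regularity, using nothing more than the fact that $P$ is closed under finite intersections and supersets.

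First I would dispense with the semifilter property. Closure under supersets is immediate: if $X \in \mathcal F$, $X \subseteq Y \subseteq \mathbb N$, and $B \in P$, then $\emptyset \ne X \cap B \subseteq Y \cap B$, so $Y \in \mathcal F$. To see that every $X \in \mathcal F$ is infinite, I would use the standing assumption (implicit in the paper's setup, where semifilters live in infinite subsets of $\mathbb N$) that $P$ extends the Fr\'echet filter. Then for every $n$ the cofinite set $\mathbb N \setminus \{1,\ldots,n\}$ lies in $P$, so $X$ must meet it; hence $X$ is unbounded, so infinite.

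For partition regularity, suppose $A \in \mathcal F$ and $A = A_1 \cup \cdots \cup A_n$ is a partition into finitely many pieces, and assume toward a contradiction that no $A_i$ belongs to $\mathcal F$. Then for each $i \le n$ we may pick $B_i \in P$ with $A_i \cap B_i = \emptyset$. Since $P$ is a filter, $B = B_1 \cap \cdots \cap B_n \in P$. But then
$$A \cap B \ =\ \bigcup_{i \le n}(A_i \cap B) \ \subseteq\ \bigcup_{i \le n}(A_i \cap B_i) \ =\ \emptyset,$$
contradicting $A \in \mathcal F$. Hence some $A_i$ must belong to $\mathcal F$.

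There is no genuine obstacle here; the lemma is essentially a restatement of the finite intersection property of $P$, combined with the monotonicity of the condition ``meets every element of $P$'' under inclusion. The only mildly delicate point is ensuring members of $\mathcal F$ are infinite, which is precisely why one implicitly asks that $P$ be free (or at least contain the cofinite filter) in this context.
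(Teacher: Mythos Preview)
Your argument is correct; the paper states this lemma without proof, treating it as standard. Your verification of the semifilter and partition-regularity properties is exactly the routine unfolding of the definitions, and you are right that partition regularity is nothing more than the finite intersection property of $P$ in disguise.

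You are also right to flag the one genuinely delicate point: to ensure that every member of $\mathcal F$ is infinite (as the paper's definition of semifilter requires), one needs $P$ to be free, i.e., to contain the cofinite filter. The paper does not make this hypothesis explicit, so your observation is a useful clarification rather than a gap in your proof.
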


The family $\mathcal F$ is called the \emph{dual} of $P$ (in fact, partition regular families are sometimes referred to in the literature as \emph{filterduals}). Although we do not need to use the fact, let us point out that families can also be ``dualized'' as in the lemma above, and a family is partition regular if and only if its dual is a filter.

Theorem~\ref{largeness} admits a dual version:

\begin{theorem}\label{largeness2}
Let $V \subseteq W$ be models of set theory, and let $r$ be a Cohen (random) real over $V$.
\begin{enumerate}
\item Suppose $W=V[r]$. In $W$, $r$ belongs to no filter that is definable over $V$.
\item In general, $r$ belongs to no filter that is lightface $\Pi^1_2(a)$-definable, where $a$ is a real number in $V$.
\end{enumerate}
\end{theorem}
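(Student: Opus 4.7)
The plan is to derive Theorem~\ref{largeness2} from Theorem~\ref{largeness} via the duality between filters and partition regular semifilters (for part~(1)), combined with Shoenfield absoluteness (for part~(2)).

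For part~(1), suppose toward a contradiction that $r$ lies in a filter $P \in W$ definable over $V$ by a formula $\psi(X, a)$ with parameter $a \in V$. The dual
\[
\mathcal F = \{X \subseteq \mathbb N : X \cap B \neq \emptyset \text{ for all } B \in P\}
\]
is a partition regular semifilter by the preceding lemma, and it is itself definable over $V$ by the formula $\forall B\,(\psi(B, a) \to X \cap B \neq \emptyset)$. The key observation is that $\mathbb N \setminus r$ is also Cohen (respectively random) over $V$: the coordinate-wise complementation $F \mapsto 1 - F$ is a self-homeomorphism of $2^\omega$ that preserves both Baire category and the product measure, so it carries Cohen reals to Cohen reals and random reals to random reals, and moreover $V[\mathbb N \setminus r] = V[r] = W$. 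Theorem~\ref{largeness}(1) applied to $\mathbb N \setminus r$ in place of $r$ therefore yields $\mathbb N \setminus r \in \mathcal F$. But instantiating the defining property of $\mathcal F$ with $B = r$, which lies in $P$ by assumption, forces $(\mathbb N \setminus r) \cap r \neq \emptyset$, absurd.

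For part~(2), the strategy is to use Shoenfield absoluteness to reduce to part~(1) inside the intermediate model $V[r]$. Let $P$ be a $\Pi^1_2(a)$-definable filter in $W$, defined by $\psi(X, a)$ with $a$ a real in $V$, and assume $r \in P$. Set
\[
P^* = \{X \in V[r] : V[r] \models \psi(X, a)\}.
\]
Since $\Pi^1_2(a)$ formulas are absolute between the transitive models $V[r]$ and $W$, we have $P^* = P \cap V[r]$. From this it is immediate that $P^*$ inherits the filter axioms from $P$ on the reals of $V[r]$: if $X, Y \in P^*$ then $X, Y \in P$, so $X \cap Y \in P$, and $X \cap Y \in V[r]$, whence $X \cap Y \in P^*$; superset closure and extension of the Fr\'echet filter follow in the same way. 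Thus $P^*$ is a filter in $V[r]$ definable over $V$. Moreover $\psi(r, a)$ holds in $W$ by assumption, hence also in $V[r]$ by Shoenfield, so $r \in P^*$. This contradicts part~(1) applied to the pair $V \subseteq V[r]$.

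The main subtle point is the absoluteness transfer in part~(2): the assertion ``$P$ is a filter'' quantifies over reals inside the body of $\psi$ and so is not itself $\Pi^1_2$, and therefore cannot be transferred wholesale. Instead one must use the equality $P^* = P \cap V[r]$ to apply the filter axioms instance by instance on reals of $V[r]$, as above. The remaining ancillary facts --- that $\mathbb N \setminus r$ inherits Cohen/randomness from $r$ via the bit-flipping involution, and that the displayed formula for $\mathcal F$ correctly captures the dual in the sense of the preceding lemma --- are routine.
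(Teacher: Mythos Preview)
Your argument is correct. For part~(1) it is essentially identical to the paper's: form the dual $\mathcal F$ of $P$, observe that $\mathbb N\setminus r$ is again Cohen/random with $V[\mathbb N\setminus r]=V[r]$, apply Theorem~\ref{largeness}(1) to get $\mathbb N\setminus r\in\mathcal F$, and derive a contradiction from $r\in P$.

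For part~(2) the two proofs diverge slightly in organization. The paper dualizes $P$ in $W$, asserts that the dual $\mathcal F$ is still $\Pi^1_2(a)$, and then invokes Theorem~\ref{largeness}(2) directly. You instead pull the filter down to $V[r]$ first via Shoenfield, verify instance by instance that $P^*=P\cap V[r]$ is a filter there, and then appeal to part~(1). Your route is a bit more careful on the complexity bookkeeping: under the displayed formula $\forall B\,(\psi(B,a)\to X\cap B\neq\emptyset)$ the dual of a $\Pi^1_2$ class is prima facie $\Pi^1_3$, so the paper's one-line claim really relies on the equivalent description $\mathcal F=\{X:\neg\psi(\mathbb N\setminus X,a)\}$ (valid because $P$ is a filter), which is $\Sigma^1_2$ and hence still within the scope of Theorem~\ref{largeness}(2). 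Your argument sidesteps this point entirely by never needing to bound the complexity of the dual; the price is the small extra verification that the filter axioms transfer to $V[r]$, which you handle correctly.
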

\begin{proof}
Let $P$ be a filter in $W$, and let $\mathcal F$ denote its dual. Clearly, $\mathcal F$ is definable over $V$ if and only if $P$ is. Also, if $P$ is lightface $\Pi^1_2(a)$-definable, where $a$ is a real number in $V$, then $\mathcal F$ is lightface $\Pi^1_2(a)$-definable as well.

By Theorem~\ref{largeness}, $\mathbb N - r$ (which is also a Cohen/random real) belongs to $\mathcal F$ under the hypotheses of either $(1)$ or $(2)$. Of course, if $\mathbb N - r$ belongs to $\mathcal F$ then $r \notin P$.
\end{proof}

Because the definition of a family from its dual requires a universal quantifier, it is possible that the dual of a $\Sigma^1_2(a)$-definable filter is neither $\Sigma^1_2(a)$ nor $\Pi^1_2(a)$. This is why we have dropped the $\Sigma^1_2(a)$ part from $(2)$ in the dual version. However, we will point out that one may go through the proof of Theorem~\ref{largeness}, dualizing everything, to obtain a stronger version of Theorem~\ref{largeness2} in which $\Sigma^1_2(a)$ filters are allowed. We leave these details as an exercise.

\section{Mahler's classification of Cohen and random reals}
In this section we consider Mahler's classification of the reals.
The definition of these classes draws and extends the idea of a Liouville number.
Below we review the definition of these classes and refer to \cite{Bugeaud} for further details. The results presented here are obtained in
\cite{dean}, where similar results are proved for some other generic reals.

For a polynomial $P(X) \in \mathbb{Z}[X]$, let $H(P)$ denote the height of $P$ and $\deg(P)$ denote the degree of $P$.
Given positive integer $n$ and real numbers $\xi$ and $H \geq 1$, define
\[
w_{n}(\xi, H)=\min\{|P(\xi)|: P(X) \in \mathbb{Z}[X], H(P) \leq H, \deg(P) \leq n, P(\xi) \neq 0                \}.
\]
Then set
\[
w_n(\xi) = \limsup_{H \to \infty} \dfrac{-\log w_n(\xi, H)}{\log H}
\]
and
\[
w(\xi) = \limsup_{n \to \infty} \dfrac{w_n(\xi)}{n}.
\]
In other words, $w_n(\xi)$ is the supremum of the real numbers $w$ for which there
exist infinitely many integer polynomials $P(X)$ of degree at most $n$ satisfying
\[
0 < |P(\xi)| < H(P)^{-w}.
\]

Mahler's classes $A, S, T$ and $U$ are defined as follows. Let $\xi$ be a real number. Then
\begin{itemize}
\item $\xi$ is an $A$-number if $w(\xi)=0.$

\item $\xi$ is an $S$-number if $0< w(\xi) < \infty.$

\item $\xi$ is a $T$-number if $w(\xi)=\infty$ and $w_n(\xi) < \infty$ for any $n \geq 1.$

\item $\xi$ is a $U$-number  if  $w(\xi)=\infty$ and $w_n(\xi) = \infty$ for some $n $ onwards.
\end{itemize}

It is known that $A$-numbers are exactly the class of algebraic numbers.
  There is another classification of reals known as Koksma's classification, where  instead of looking
at the approximation of $0$ by integer polynomials evaluated at the real number
$\xi$,  the approximation of $\xi$ by algebraic numbers is considered.

For a real number $\a$ let $\deg(\a)=\deg(P)$ and $H(\a)=H(P),$ where $P(X) \in \mathbb{Z}[X]$ is the minimal polynomial of $\a$.
For a positive integer $n$ and real numbers $\xi$ and $H \geq 1$, define
\[
w^*_{n}(\xi, H)=\min\{|\xi - \a|: \a \text{~real algebraic~} \deg(\a) \leq n, ~H(\a) \leq H,~ \a \neq \xi                \}.
\]
 Then set
\[
w^*_n(\xi) = \limsup_{H \to \infty} \dfrac{-\log (H w^*_n(\xi, H))}{\log H}
\]
and
\[
w^*(\xi) = \limsup_{n \to \infty} \dfrac{w^*_n(\xi)}{n}.
\]
In other words, $w^*_n(\xi)$ is the supremum of the real numbers $w$ for which there
exist infinitely many real algebraic numbers $\a$ of degree at most $n$ satisfying
\[
0< |\xi - \a| < H(\a)^{-w-1}.
\]
 Koksma's  classes $A^*, S^*, T^*$ and $U^*$ are defined as follows. Let $\xi$ be a real number. Then
\begin{itemize}
\item $\xi$ is an $A^*$-number if $w^*(\xi)=0.$

\item $\xi$ is an $S^*$-number if $0< w^*(\xi) < \infty.$

\item $\xi$ is a $T^*$-number if $w^*(\xi)=\infty$ and $w^*_n(\xi) < \infty$ for any $n \geq 1.$

\item $\xi$ is a $U^*$-number  if  $w^*(\xi)=\infty$ and $w^*_n(\xi) = \infty$ for some $n $ onwards.
\end{itemize}
 It is well-know that the classifications of Mahler and of Koksma coincide, in the sense that for any real number $\xi$,
\begin{center}
$\xi$ is an $A$ (resp. $S, T$ or $U$) number $\iff$  $\xi$ is an $A^*$ (resp. $S^*, T^*$ or $U^*$) number.
\end{center}
We now prove the following.
\begin{theorem} (\cite{dean})
\label{mahler}

$(a)$
Assume $r$ is a Cohen real over any transitive model $M$. Then $r$ is in the class $U$.

$(b)$ Assume $r$ is a random real over any transitive model $M$. Then $r$ is in the class $S$.
\end{theorem}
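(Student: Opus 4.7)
My plan for both parts is to exhibit a Borel set whose code lies in $M$, verify that it is meager (for part $(a)$) or null (for part $(b)$), and then invoke the characterization of Cohen/random reals from Lemmas~\ref{cohen characterization} and~\ref{random characterization}. Throughout I identify the Cohen/random real $r \in 2^\omega$ with a real number in $[0,1]$ via the standard binary-expansion map $x \mapsto \sum_{n} x(n)\, 2^{-n-1}$; this map is continuous and measure-preserving, and is injective off the countable (hence meager and null) set of dyadic rationals, so meager/null Borel sets in $[0,1]$ with codes in $M$ pull back to meager/null Borel sets in $2^\omega$ with codes in $M$.

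For $(a)$, the set of Liouville numbers is the classical dense $G_\delta$
\[
\mathcal L \;=\; \bigcap_{n \geq 1}\, \bigcup_{q \geq 2}\, \bigcup_{p \in \mathbb Z} \left(\frac{p}{q} - \frac{1}{q^n},\; \frac{p}{q} + \frac{1}{q^n}\right),
\]
whose complement is meager and whose Borel code is absolute, since the definition refers only to integers. Lemma~\ref{cohen characterization} then gives $r \in \mathcal L$. Every Liouville number $\xi$ satisfies $w_1(\xi) = \infty$, so every Liouville number lies in Mahler's class $U$; in particular $r$ is a $U$-number, as required.

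For $(b)$, I would appeal to the theorem of Sprind\v{z}uk (settling Mahler's conjecture) that Lebesgue-almost every real is an $S$-number. Equivalently, the set
\[
A \cup T \cup U \;=\; \{\xi \in \mathbb R : w(\xi) = 0 \text{ or } w(\xi) = \infty\}
\]
is Lebesgue null. This set is Borel: each $w_n(\xi, H)$ is Borel in $\xi$, being the minimum of countably many continuous functions indexed by polynomials $P \in \mathbb Z[X]$ of bounded height and degree, and $\limsup$ preserves Borel-measurability, so the conditions $w(\xi) = 0$ and $w(\xi) = \infty$ each define a Borel set. Since everything is defined from the integers, a Borel code for $A \cup T \cup U$ lives in $M$. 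Lemma~\ref{random characterization} then gives $r \notin A \cup T \cup U$, so $r$ is an $S$-number.

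The substantive obstacle is the number-theoretic ingredient in $(b)$, namely Sprind\v{z}uk's deep result that $T \cup U$ has Lebesgue measure zero; the forcing-theoretic half of the argument is immediate once this is in hand. Part $(a)$ is routine once one recalls the Baire-category description of the Liouville numbers, which essentially goes back to Erd\H{o}s. The only point to verify carefully in either case is that the Borel code witnessing meagreness/nullity is absolute, which follows from the purely arithmetic nature of the definitions of $\mathcal L$ and of the $w_n$.
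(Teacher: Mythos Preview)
Your proposal is correct in both parts.

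For $(b)$ your argument is essentially the paper's: both invoke Sprind\v{z}uk's theorem that the non-$S$-numbers form a null Borel set with an absolute code, and then use the characterization of random reals from Lemma~\ref{random characterization}. The only difference is packaging --- the paper states and proves an explicit version of Sprind\v{z}uk's result (Lemma~\ref{Sprindzuk}) and phrases the conclusion as a density argument in $\MRB$, whereas you cite Sprind\v{z}uk as a black box and appeal directly to Lemma~\ref{random characterization}. These are the same proof.

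For $(a)$ your route is genuinely different. The paper does not use Lemma~\ref{cohen characterization} at all; instead it appeals to the largeness machinery of Theorem~\ref{largeness} to find, for each $n$, some $m>n$ with $r \cap \{m,\dots,m^2\}=\emptyset$, and then exhibits the dyadic rational $\alpha=\sum_{i<m} r(i)2^{-i-1}$ as an explicit witness to $|r-\alpha|<H(\alpha)^{-m+1}$, concluding $w^*_1(r)=\infty$. Your approach is shorter and more self-contained: it bypasses Theorem~\ref{largeness} entirely and uses only the classical (Erd\H{o}s) fact that the Liouville numbers form a dense $G_\delta$ with an arithmetically definable code, together with Lemma~\ref{cohen characterization}. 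The paper's argument, on the other hand, yields slightly more concrete information --- it locates specific rational approximants coming from the gap structure of $r$ --- and illustrates that the largeness theorem has number-theoretic content. Either argument is fine; yours is the more economical.
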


For the proof of Theorem \ref{mahler}$(b)$, we will need the following result of Sprindzuk.
\begin{lemma} $($Sprindzuk 1965$)$
\label{Sprindzuk}
There exists a $G_\delta$ set $A$ of measure zero which contains all transcendental numbers $\xi$ with $w^*(\xi)>1.$
\end{lemma}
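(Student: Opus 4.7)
The plan is to build $A$ as an outer-regular envelope of a countable union of $G_\delta$ null sets, each indexed by a rational approximation exponent above $1$. For each rational $\epsilon > 0$ and integer $k \geq 1$, define the open set
$$W_{k,\epsilon} \;=\; \bigcup_{n \geq 1} \; \bigcup_{\substack{\alpha \in \mathbb{R} \text{ algebraic} \\ \deg(\alpha) \leq n,\; H(\alpha) \geq k}} \bigl(\alpha - H(\alpha)^{-n(1+\epsilon)-1},\; \alpha + H(\alpha)^{-n(1+\epsilon)-1}\bigr),$$
and put $V_\epsilon = \bigcap_{k \geq 1} W_{k,\epsilon}$, which is a $G_\delta$ set.

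First I would prove the measure bound $\mu(W_{k,\epsilon}) \to 0$ as $k \to \infty$ for each fixed $\epsilon > 0$. The standard counting estimate gives at most $C^d H^{d+1}$ real algebraic numbers of degree exactly $d$ and height at most $H$ (bounding the integer polynomials of degree $d$ and height $\leq H$ by $(2H+1)^{d+1}$ and noting each has at most $d$ real roots). A dyadic decomposition in $H(\alpha)$ then yields $\mu(W_{k,\epsilon}) \leq \sum_{d \geq 1} O(d\, C^d\, k^{-d\epsilon})$, a geometric series that converges once $k^\epsilon > C$ and vanishes as $k \to \infty$. Hence each $V_\epsilon$ is a $G_\delta$ set of Lebesgue measure zero.

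Second I would verify that the $V_\epsilon$'s cover all the relevant transcendentals. Given transcendental $\xi$ with $w^*(\xi) > 1$, pick a positive rational $\epsilon < w^*(\xi) - 1$. Then $\limsup_n w^*_n(\xi)/n > 1 + \epsilon$, so there are infinitely many $n$ with $w^*_n(\xi) > n(1+\epsilon)$. For each such $n$, the $\limsup$ in the definition of $w^*_n(\xi)$ forces infinitely many heights $H$ to satisfy $w^*_n(\xi, H) < H^{-n(1+\epsilon)-1}$, producing a real algebraic $\alpha$ of degree $\leq n$ with $0 < |\xi - \alpha| < H(\alpha)^{-n(1+\epsilon)-1}$ (using $H(\alpha) \leq H$ and negative exponent). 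Since only finitely many algebraics of degree $\leq n$ have bounded height and the minimizers must accumulate at $\xi$, one can arrange $H(\alpha) \geq k$ for any prescribed $k$; thus $\xi \in W_{k,\epsilon}$ for every $k$, i.e., $\xi \in V_\epsilon$.

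Putting these together, $B := \bigcup_{\epsilon \in \mathbb{Q}_{>0}} V_\epsilon$ is a $G_{\delta\sigma}$ null set containing every transcendental $\xi$ with $w^*(\xi) > 1$. To upgrade this to a $G_\delta$ set, I would invoke outer regularity of Lebesgue measure: for each $j \geq 1$ choose an open set $U_j \supseteq B$ with $\mu(U_j) < 1/j$ and set $A = \bigcap_{j \geq 1} U_j$, which is $G_\delta$, null, and contains $B$. The main obstacle is the measure estimate in the second step: the exponential counting factor $C^d$ must be absorbed by $k^{-d\epsilon}$, which is precisely why one has to fix a positive rational $\epsilon$ first, and why the direct construction only yields a $G_{\delta\sigma}$ set that must then be enveloped to obtain the required $G_\delta$ structure.
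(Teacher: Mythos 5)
Your proof is correct, and it follows the same basic strategy as the paper's --- cover the transcendentals with $w^*(\xi)>1$ by intervals centred at algebraic numbers, with radii dictated by the height, and beat the polynomial count by the decay of the radii --- but the two arguments diverge in how the approximation exponent is chosen, and this changes the shape of the final set. You fix a rational $\epsilon>0$ with $w^*(\xi)>1+\epsilon$ and use radii $H(\alpha)^{-n(1+\epsilon)-1}$; since the gain over the count $\sim H^{n+1}$ is only $H^{-n\epsilon}$ per height, you genuinely need the dyadic decomposition and the condition $k^\epsilon>C$ to sum the geometric series, and you must take a union over countably many $\epsilon$, landing on a $G_{\delta\sigma}$ null set that you then envelope in a $G_\delta$ by outer regularity (both steps are fine). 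The paper instead exploits that $w^*(\xi)=\limsup_n w^*_n(\xi)/n>1$ already forces $w^*_n(\xi)>n+2$ for some $n$, i.e.\ it uses the degree-dependent exponent $1+2/n$; the resulting radius $H^{-(n+3)}$ beats the count $n(2H+1)^{n+1}$ by a factor $\sim H^{-2}$, so the sum over heights converges with no dyadic bookkeeping, a single open set $A_\epsilon$ of measure $<\epsilon$ contains \emph{all} the relevant $\xi$ at once, and the intersection over $\epsilon=1/n$ is a $G_\delta$ directly. Your route costs an extra (routine) regularity step and a slightly more delicate counting estimate; the paper's buys a cleaner uniform cover at the price of the small observation that $1+2/n$ eventually drops below any fixed excess over $1$. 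One point worth making explicit in your write-up: in the covering step you need the minimizing $\alpha$'s to have height $\geq k$, and your justification (only finitely many algebraic numbers of degree $\leq n$ have height $\leq k$, while the minimizers accumulate at the transcendental $\xi$ without ever equalling it) is exactly right and should be stated rather than waved at.
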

\begin{proof}
	We present a proof for completeness. Set $S=\{\xi: \xi \text{~is transcendental and ~}  w^*(\xi)>1   \}$.
	Let $\epsilon>0$ be given. For each $n, H \geq 1$ set
	\[
	X_{n, H}= \{\alpha: \alpha \text{~is real algebraic,~} \deg(\alpha) \leq n \text{~and~} H(\alpha) \leq H\}.
	\]
	
	For each $n$ let $i_n$ be sufficiently large such that
	\[
	\sum_{H=i_n}^\infty n(2H+1)^{n+1} (\dfrac{2}{H^{n+3}}) < \dfrac{\epsilon}{2^{n+1}}.
	\]
	Set
	\[
	A_\epsilon= \bigcup_{n=1}^\infty \bigcup_{H=i_n}^\infty   \bigcup_{y \in A_{n, H}} (y - \dfrac{1}{H^{n+3}}, y+ \dfrac{1}{H^{n+3}}).
	\]
	Then it is easily seen that
	\[
	m(A_\epsilon)= \sum_{n=1}^\infty \sum_{H=i_n}^\infty \sum_{y \in A_{n, H}}  (\dfrac{2}{H^{n+3}}) \leq \sum_{n=1}^\infty \dfrac{\epsilon}{2^{n+1}} <\epsilon.
	\]
	We now show that $A_\epsilon$ contains  all real numbers $\xi$ with $w^*(\xi) > 1.$
	Given such a $\xi,$ let
	$n$ be large enough such that $ \dfrac{w^*_n(\xi)}{n} > 1 +2/n.$
	Then choose $H > i_n$ such that  $$\dfrac{-\log (H w^*_n(\xi, H))}{\log H} > n(1 + 2/n).$$
	Let $\alpha \in X_{n, H}$ be such that $w^*_n(\xi, H)= |\xi - \alpha|.$ It follows that
	\[
	|\xi - \alpha| < \dfrac{1}{H^{ n(1 + 2/n)+1}} = \dfrac{1}{H^{ n+3}},
	\]
	and hence $\xi \in A_\epsilon.$
	It follows that
	\[
	m(S) \leq m(A_\epsilon) < \epsilon.
	\]
	Finally let $A=\bigcap_{0<n<\omega}A_n$. Then $A$ is a
	$G_\delta$ setof measure zero which contains $S$.
	The result follows.
\end{proof}

 We are now ready to complete the proof of Theorem \ref{mahler}.
\begin{proof}[Proof of Theorem \ref{mahler} $(a)$ ]
 Assume $r$ is a Cohen real. Let $n \geq 1.$ By the proof of Theorem \ref{largeness}$(2)$, we can find $m > n$
 such that $r \cap \{m, \dots, m^2         \}=\emptyset$. Let $\a=\sum_{i < m} \dfrac{r(i)}{2^{i+1}}$,
 where $r(i)=1$ if $i \in r$ and $r(i)=0$ if $i \notin r.$ Then $H(\a) \leq 2^{m}$ and
 \[
 |r - \a| =\sum_{i > m} \dfrac{r(i)}{2^{i+1}} = \sum_{i> m^2}\dfrac{r(i)}{2^{i+1}} \leq \dfrac{1}{2^{m^2+1}} < \dfrac{1}{2^{m^2}}.
 \]
 Then
 \[
 \dfrac{-\log(2^{m}2^{-m^2})}{\log 2^{m}}=\dfrac{\log 2^{m^2-m}}{\log 2^{m}}=\dfrac{m^2-m}{m}=m-1 \geq n.
 \]
It follows that
 \[
 w^*_{1}(\xi) =\limsup_{H \to \infty} \dfrac{-\log (H w^*_1(\xi, H))}{\log H}=\infty.
 \]
Thus $r$ is in class $U$.
\end{proof}
 \begin{proof}[Proof of Theorem \ref{mahler} $(b)$ ]
Suppose $r$ is a random real. Let $p=[T] \in \MRB.$
Set $S=T \cap (2^\omega \setminus A)$, where $A$ is  the set introduced in Lemma \ref{Sprindzuk}.
Then $\mu(S)=\mu(T)>0$, and so $q=[S]$ is well-defined and it extends $p$. Clearly,
\[
q \Vdash~``~\dot{r} \text{~is in the class~}S\text{~''.~}
\]
The result follows.
 \end{proof}
 Recall from number theory that a Liouville number is a real number $r$ with the property that, for every positive integer $n$, there exist infinitely many pairs of integers $(p, q)$ with $q > 1$ such that
 \begin{center}
 $0 < |r - \dfrac{p}{q}| < \dfrac{1}{q^n}$.
 \end{center}
 One can easily show that a real number $r$ is a Liouville number if and only if $w^*_1(r)=\infty.$ It follows from the proof of Theorem \ref{mahler}
 that each Cohen reals is a  Liouville number.
 Let us close this section with the following corollary of the above result.
\begin{corollary}
\begin{enumerate}
\item [(a)] In the generic extension by $\MCB,$ the set $\{r \in \MRB: r$ is Cohen generic over $V           \}$ has measure zero.

\item [(b)] The set $\{r \in \MRB: r$ is a  Liouville number $\}$ contains a perfect set.
\end{enumerate}
\end{corollary}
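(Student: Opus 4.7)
The plan for (a) is to chain together two facts already proved in the paper. By Theorem~\ref{mahler}(a), every Cohen real $r$ over $V$ is a Liouville number, so $w^*_1(r) = \infty$; since $w^*_n(\xi) \geq w^*_1(\xi)$ for every $n$ (minimizing over a larger class of algebraic approximants only decreases the distance), this forces $w^*(r) = \infty > 1$, and $r$ is transcendental by the classical Liouville inequality. Sprindzuk's Lemma~\ref{Sprindzuk} then places every such $r$ in the $G_\delta$ set $A$ of Lebesgue measure zero exhibited there. The Borel code for $A$ depends only on ground-model data (enumerations of integer polynomials and rationals), so in the Cohen extension $V^{\MCB}$ the reinterpretation of this code is still a Borel set of measure zero. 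Since the set of reals that are Cohen-generic over $V$ is contained in this Borel set, it has outer Lebesgue measure zero, which gives (a).

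For (b) I would produce a perfect set of Liouville numbers directly, in ZFC, by the classical lacunary-series construction. One convenient version is the continuous map $f \colon 2^\omega \to \mathbb{R}$ given by
\[
f(\epsilon) = \sum_{n=1}^{\infty} \bigl(1 + \epsilon(n)\bigr) \cdot 10^{-n!}.
\]
The rapid growth $(n+1)! \geq 2 \cdot n!$ does two jobs at once. First, at the least index $n_0$ where $\epsilon$ and $\epsilon'$ disagree the leading term $10^{-n_0!}$ strictly dominates the total mass of the tail $\sum_{n>n_0} 10^{-n!}$, so distinct $\epsilon$ produce distinct values; $f$ is therefore a continuous injection of the Cantor space, and its image $P$ is a perfect subset of $\mathbb{R}$. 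Moreover, the shift by $1$ in the coefficient makes every image irrational (the nonzero decimal digits sit at the lacunary positions $n!$, so the expansion is not eventually periodic). Second, for each $\epsilon$ the truncation of the series at $N$ is a rational $p_N / q_N$ with $q_N = 10^{N!}$ satisfying $0 < |f(\epsilon) - p_N/q_N| < q_N^{-N}$ for every $N \geq 1$, since the tail is bounded by $4 \cdot 10^{-(N+1)!}$ and $(N+1)! - N \cdot N! = N! \geq 1$. This is precisely the defining condition of a Liouville number, so every point of $P$ is Liouville.

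The only non-routine step in (a) is the absoluteness of the Lebesgue measure of a Borel set from its ground-model code, which is standard and would only need to be cited. Part (b) carries no serious obstacle: the single inequality $(n+1)! > n \cdot n!$ simultaneously supplies the injectivity of $f$, the irrationality of its values, and the Liouville approximation property, so the construction really reduces to one routine estimate.
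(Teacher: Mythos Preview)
Your argument for (a) matches the paper's: Cohen reals are $U$-numbers (in particular Liouville, so $w^*>1$), and Sprindzuk's lemma places all such reals in a null set. The paper simply applies Sprindzuk's lemma inside $V[G]$ rather than tracking a ground-model Borel code, so your absoluteness remark is correct but not actually needed; otherwise the proofs coincide.

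For (b) you take a genuinely different route. The paper stays within its forcing framework: it fixes a countable elementary submodel $M\prec H(\theta)$, takes a perfect set $P$ of reals each of which is Cohen generic over $M$ (such a $P$ exists because the Cohen generics over a countable model form a dense $G_\delta$), and then invokes Theorem~\ref{mahler}(a) plus absoluteness of ``Liouville'' to conclude that every point of $P$ is Liouville. Your proof instead gives the classical direct construction via the lacunary map $\epsilon\mapsto\sum (1+\epsilon(n))10^{-n!}$, with a single factorial estimate driving injectivity, irrationality, and the Liouville approximation. Your approach is more elementary and self-contained, requiring no model theory or forcing at all; the paper's approach, on the other hand, exhibits (b) as a corollary of the section's main theorem and illustrates the general technique of producing perfect sets with a prescribed first-order property by passing through Cohen generics over a countable model. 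Both are correct; yours is shorter, theirs is more thematic.
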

\begin{proof}
(a) It follows from Lemma \ref{Sprindzuk} that almost all reals are $S$-numbers. As Cohen reals are $U$-numbers, so
$\{r \in \MRB: r$ is Cohen generic over $V           \}$ must have measure zero.\footnote{Indeed, by a result of Kasch and Volkmann \cite{kasch}
	the set of $T$-numbers and
	the set of $U$-numbers
	have  Hausdorff dimension  zero, in particular the set $\{r \in \MRB: r$ is Cohen generic over $V           \}$ has Hausdorff dimension zero.}

(b) Let $\theta=(2^{\aleph_0})^+$ and let  $M$ be a countable elementary submodel of $H(\theta).$  Let $P \subseteq \MRB$ be a perfect set of reals
such that each $r \in P$ is Cohen generic over $M$. Let $r \in P$. By Theorem \ref{mahler}(a), $M[r]\models$`` $r$ is a  Liouville number'', and hence by absoluteness,
$r$ is a  Liouville number. Thus
$\{r \in \MRB: r$ is a  Liouville number $ \} \supseteq T,$
and the result follows.
\end{proof}


\section{Addition of Cohen or random reals defines $\mathbb{Z}$}
An infinite structure $(M, <, \dots)$ which is totally ordered by $<$ is called an o-minimal structure if and only if every definable subset $X$ of $M$ (with parameters from $M$) is a finite union of intervals and points. It is clear that the set $\mathbb{Z}$ of integers is not definable in an o-minimal structure.

It follows from Tarski's elimination of quantifiers that the structure
$$\mathfrak{R}=(\mathbb{R}, +, \cdot, <, 0, 1)$$
is an o-minimal structure and hence  $\mathbb{Z}$ is not definable in it. An important problem is to see which extensions of
$\mathfrak{R}$ remain o-minimal or at least retain the property that $\mathbb{Z}$, the set of integers, is not definable in them. There is a vast range of results in this direction; see for example \cite{forn}, \cite{hier}, \cite{khani}, \cite{miller} ,  \cite{dries1} and  \cite{wilkie}.

Recall that a subset $C \subseteq \mathbb{R}$ is called tame if $\mathbb{Z}$ is not definable in the structure $(\mathfrak{R}, C)$ and it is called wild otherwise.

We relate the above phenomenon to forcing and show that the addition of Cohen or random reals to the structure $\mathfrak{R}$, when considered as unary predicates,  defines $\mathbb{Z}$ which completes the proof of Theorem \ref{o-minimality}.
It follows that the Cohen and random reals are wild in the above sense.

\begin{remark}
It is known that  o-minimality is closed under expansions by constants. It follows that if we consider Cohen and random reals as constants (or any other generic real), then
the structure $(\mathbb{R}^{V[r]}, +, \cdot, <, 0, 1, r)$ remains o-minimal and hence it does not define $\mathbb{Z}.$
\end{remark}
Accordingly, in what follows we interpret a Cohen or random real as a subset of $\omega$, i.e., as a unary predicate in $\mathbb R$. This interpretation works in the obvious way: $r \in 2^\omega$ is identified with the subset of $\omega$ for which it is the characteristic function.

First we prove  Theorem \ref{o-minimality} for Cohen reals.
Thus assume $r$ is a Cohen real over $V$ and in $V[r],$ consider the structure
$(\mathbb{R}^{V[r]}, +, \cdot, <, 0, 1, r)$. Then it is easily seen, by density arguments, that
\[
\mathbb{Z}= \{x \in \mathbb{R}^{V[r]}: \exists a, b \in r \text{~such that ~} x=a - b     \},
\]
and hence $\mathbb{Z}$ is definable in $(\mathbb{R}^{V[r]}, +, \cdot, <, 0, 1, r)$.

For a random real, the theorem follows from the following easy lemma.
\begin{lemma}
Almost all reals define $\mathbb{Z}$ in the sense that for almost all reals $r \in 2^\omega,$ the set  $\mathbb{Z}$ is definable in $(\mathbb{R}, +, \cdot, <, 0, 1, r)$.
\end{lemma}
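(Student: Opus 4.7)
The plan is to show that for Lebesgue-almost every $r \in 2^\omega$ (identified with a subset of $\omega$ via its characteristic function), the formula
$$\varphi(x) \ := \ \exists a\, \exists b\,\bigl(r(a)\wedge r(b)\wedge x = a-b\bigr)$$
defines $\mathbb{Z}$ in the structure $(\mathbb{R},+,\cdot,<,0,1,r)$. The unary predicate $r$ can only be satisfied by elements of $\omega$, so any $x$ witnessing $\varphi$ is automatically a difference of two natural numbers, hence an integer. Thus $\{x : \varphi(x)\} \subseteq \mathbb{Z}$ holds for every $r$. The content of the lemma is the reverse inclusion: for a measure-one set of $r$, every integer $n$ has the form $a-b$ with $a,b\in r\cap\omega$.

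The reverse inclusion is a Borel–Cantelli argument. Fix $n \in \mathbb{N}$ and consider the events $A_k = \{r : 2k(n+1) \in r \text{ and } 2k(n+1)+n \in r\}$ for $k \in \omega$. These events depend on pairwise disjoint coordinates of $2^\omega$, so they are independent, and each has probability $\tfrac{1}{4}$. Since $\sum_k \mu(A_k) = \infty$, the second Borel–Cantelli lemma gives $\mu(\{r : A_k \text{ for infinitely many } k\}) = 1$. In particular, for almost every $r$ there exists a witness $(a,b) = (2k(n+1)+n,\,2k(n+1))$ with $a,b\in r$ and $a-b=n$. By symmetry (swap the roles of $a$ and $b$) the same holds for $-n$.

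Let $N_n$ be the null set of $r$ for which no such pair exists for the integer $n$. The union $\bigcup_{n\in\mathbb{Z}} N_n$ is still null, and for every $r$ outside this union, every integer (positive, negative, or zero) is realized as a difference $a-b$ with $a,b\in r$. For such $r$ the formula $\varphi$ defines exactly $\mathbb{Z}$, which establishes the lemma.

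There is no real obstacle here; the only point demanding any care is avoiding the mild dependence between overlapping pairs $(k,k+n)$ and $(k+n,k+2n)$, which is handled by restricting to the arithmetic progression $\{2k(n+1) : k\in\omega\}$ so that the $A_k$ are genuinely independent and Borel–Cantelli applies directly.
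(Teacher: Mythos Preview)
Your argument is correct and follows essentially the same approach as the paper: both use the formula $x=a-b$ with $a,b\in r$ and reduce to showing that each set $A_m=\{r:\exists a,b\in r,\ a-b=m\}$ has full measure, then intersect over $m$. The paper simply asserts that $A_m$ has measure one, whereas you supply the Borel--Cantelli justification (with a careful choice of disjoint coordinate pairs to guarantee independence); this is a welcome elaboration but not a different route.
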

\begin{proof}
For any $m \in \omega,$ the set
\[
A_m=\{ r \in 2^\omega: \exists a, b \in \omega \text{~such that~} r(a) = r(b) = 1 \text{~and~} m=  a - b                 \}
\]
is of measure one, and hence so is the set $A= \bigcap_{m \in \omega}A_m.$ But any $r \in A$ defines $\mathbb{Z}$, as
$\mathbb{Z}= \{x \in \mathbb{R}: \exists a, b \in r \text{~such that~} x=a - b     \}.$
\end{proof}

To complement these observations, we now define a forcing notion which adds a generic real $r$ that does not define $\mathbb{Z}$. Let
$\MPB$ be the set  of all finite functions $p: n \to 2$ such that $p(k)=0$ if $k< n$ is not a power of $2$ (i.e., it is not of the form $2^i$ for some $i$),
 ordered by reverse inclusion. Let $G$ be $\MPB$-generic over $V$ and let $r$ be the generic real added by $G$, namely
\[
r=\{k< \omega: \exists p \in G \text{~such that~} p(k)=1          \}.
\]
Note that $r \subseteq 2^{\mathbb{Z}}=\{2^n: n \in \mathbb{Z}  \}$. In order to show that $\mathbb{Z}$ is not definable in $(\mathbb{R}^{V[r]}, +, \cdot, <, 0, 1, r)$,
we need the following theorem.
\begin{theorem}
\label{friedman-miller theorem}
$\ $
\begin{enumerate}
 \item [(a)] (Friedman and Miller \cite{friedman-miller})
Let $\mathfrak{R}$ be an o-minimal expansion of $(\mathbb{R}, <, +)$ and $E\subseteq \mathbb{R}$
be  such that for every $n \in \mathbb{N}$ and definable function $f: \mathbb{R}^n \to \mathbb{R}$, the image $f(E^n)$ has no interior.
If every subset of $\mathbb{R}$ definable in $(\mathfrak{R}, E)$ has interior or is
nowhere dense, then every subset of $\mathbb{R}$ definable in $(\mathfrak{R}, E)^\sharp$ \footnote{$(\mathfrak{R}, E)^\sharp$
 is the structure obtained by adding to $\mathfrak{R}$
 predicates picking out every subset of every cartesian power $E^k$ of $E$.} has interior or
is nowhere dense. The same holds true with ``nowhere dense'' replaced by
any of ``null'', ``countable'', ``a finite union of discrete sets'' or ``discrete''.

\item [(b)] (Van den Dries \cite{dries1}) Every subset of $\mathbb{R}$ definable in $(\mathbb{R}, <, +, ., 0, 1, 2^{\mathbb{Z}})$
is the union of an open set and finitely
many discrete sets.
\end{enumerate}
\end{theorem}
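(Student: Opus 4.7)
The theorem is a compound statement citing two deep results from the literature---the Friedman--Miller preservation theorem for $(\mathfrak{R}, E)^\sharp$ in part (a), and van den Dries's decomposition theorem in part (b)---so my plan is to sketch the strategy for each rather than attempt full proofs, both of which occupy substantial sections of the cited papers.

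For part (a), the plan is to exploit a normal form for formulas in $(\mathfrak{R}, E)^\sharp$. First I would argue that every subset $X \subseteq \mathbb{R}^n$ definable in $(\mathfrak{R}, E)^\sharp$ can be written as a projection $\pi(Y \cap (\mathbb{R}^n \times E^m))$, where $Y$ is definable in $\mathfrak{R}$ alone and $m$ depends on the defining formula. This reduces the analysis to controlling $\mathfrak{R}$-definable sets ``indexed'' by tuples from $E$. Next, the sparseness hypothesis (that $f(E^n)$ has empty interior for every $\mathfrak{R}$-definable $f$) is used to show that such projections inherit smallness from the $E$-tuples used to build them. Finally, combining this with the hypothesis that every $(\mathfrak{R}, E)$-definable subset of $\mathbb{R}$ has interior or is nowhere dense, an induction on formula complexity propagates the dichotomy to the sharp expansion. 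The same scheme yields the analogous conclusions with ``nowhere dense'' replaced by ``null'', ``countable'', ``finite union of discrete sets'', or ``discrete'', since each of these classes is closed under the relevant operations (finite unions, and images by $\mathfrak{R}$-definable continuous maps applied to sparse inputs).

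For part (b), the plan is to analyze definable subsets in $(\mathbb{R}, <, +, \cdot, 0, 1, 2^{\mathbb{Z}})$ by exploiting the fact that $2^{\mathbb{Z}}$ is a discrete cyclic subgroup of $(\mathbb{R}^{>0}, \cdot)$, so that $\log_2$ identifies the induced structure on $2^{\mathbb{Z}}$ with the very restricted structure of $(\mathbb{Z}, +)$. A first step would be to establish a model-completeness or quantifier-simplification statement, reducing every formula to a boolean combination of semialgebraic conditions and conditions of the form ``$f(\bar{x}) \in 2^{\mathbb{Z}}$'' for semialgebraic $f$. Then, using the discreteness of $2^{\mathbb{Z}}$ in $\mathbb{R}$, each definable subset of $\mathbb{R}$ can be decomposed into its interior (an open set) together with a boundary coming from finitely many ``level sets'' $f^{-1}(2^{\mathbb{Z}})$, each of which is locally finite and hence contributes a discrete set.

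The main obstacle in (a) is producing the normal form without access to genuine quantifier elimination: because the sharp expansion adds arbitrary subsets of $E^k$ as primitive predicates, one must argue semantically, combining the sparseness of $E$ with the dichotomy in the unsharp expansion to control how the new predicates can interact with the geometry of $\mathfrak{R}$. In (b), the main obstacle is the cell-decomposition-style analysis that interleaves the o-minimal cells of $\mathfrak{R}$ with the discrete fibers of the maps $f^{-1}(2^{\mathbb{Z}})$; the crucial input is that $2^{\mathbb{Z}}$ is ``thin'' enough that its preimages under semialgebraic maps are generically discrete, so that the non-open part of any definable set collapses to finitely many discrete pieces.
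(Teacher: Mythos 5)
The paper does not prove this theorem at all: both parts are quoted from the literature (part (a) from Friedman--Miller \cite{friedman-miller}, part (b) from van den Dries \cite{dries1}) and are used purely as black boxes to conclude that the generic real $r \subseteq 2^{\mathbb{Z}}$ added by the poset $\MPB$ does not define $\mathbb{Z}$. So there is no in-paper argument to compare your sketch against, and treating the two results as citations --- as you essentially do --- is exactly what the paper does. Your outlines are broadly consistent in spirit with the strategies of the cited papers: Friedman and Miller do reduce unary sets definable in $(\mathfrak{R}, E)^\sharp$ to data indexed by tuples from $E$ and then invoke the dichotomy for $(\mathfrak{R}, E)$, and van den Dries does proceed via quantifier elimination in an expanded language exploiting that $2^{\mathbb{Z}}$ is a discrete multiplicative subgroup.

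Two points where your sketch of (a) is off, should you ever need to fill it in. First, the normal form $\pi\bigl(Y \cap (\mathbb{R}^n \times E^m)\bigr)$ cannot be right as stated, because the sharp expansion adds predicates for \emph{arbitrary} subsets $S \subseteq E^k$, not just for $E^k$ itself; the correct shape is a finite union of sets $\bigcup_{\bar e \in S} Y_{\bar e}$ with $S \subseteq E^m$ arbitrary and $(Y_{\bar x})$ an $\mathfrak{R}$-definable family, and a single projection of an intersection does not obviously absorb negations --- handling complements is precisely where the dichotomy hypothesis on $(\mathfrak{R}, E)$ is consumed. Second, the two hypotheses do not act independently as your sketch suggests: $f(E^n)$ is itself definable in $(\mathfrak{R}, E)$ and has no interior by the sparseness assumption, so the dichotomy hypothesis forces $f(E^n)$ to be nowhere dense (resp.\ null, countable, etc.), and the argument then concludes because every set in the normal form above with empty interior sits inside such an image up to a finite set, and each of the ideals in question is closed under subsets and finite unions. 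As you say, neither part is something one should reprove here; for the purposes of this paper the citations suffice.
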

It follows from the above theorem that $\mathbb{Z}$ is not definable  in $(\mathbb{R}^{V[r]}, +, \cdot, <, 0, 1, 2^{\mathbb{Z}})^\sharp$,
and hence it is not definable in $(\mathbb{R}^{V[r]}, +, \cdot, <, 0, 1, r)$ either.

We note that some (but not all) Silver-generic reals $r$ have the property that $r \subseteq 2^{\mathbb Z}$. (This property is forced by a condition in the Silver poset, namely the partial function mapping $\omega \setminus 2^{\mathbb Z}$ to $0$.) Similarly, some (but not all) Sacks-generic reals $r$ have the property that $r \subseteq 2^{\mathbb Z}$. (This is forced by a condition in the Sacks poset, $\{ r \in 2^\omega :\, r(k) = 0 \text{~whenever~}k \notin 2^\mathbb Z\}$.) Consequently, at least some Silver-generic and Sacks-generic reals have the property that $(\mathbb{R}^{V[r]}, +, \cdot, <, 0, 1, r)$ does not define $\mathbb{Z}$.

\begin{question}
If $r$ is an arbitrary Silver-generic real over $V$, does $(\mathbb{R}^{V[r]}, +, \cdot, <, 0, 1, r)$ define $\mathbb{Z}$? What about Sacks-generic reals? Other minimal reals?
\end{question}

Recall that Silver reals and Sacks reals are two primary examples of minimal reals: i.e., reals $r$ generic over $V$ such that if $s \in V[r]$ then either $s \in V$ or $V[s] = V[r]$.

\begin{question}
Assume $V[r]$ is a generic extension of $V$ by Cohen (res. random) forcing  and (in $V[r]$) set
\begin{center}
$C= \{ s \in \mathbb{R}: s$ is Cohen (res. random) generic over $V              \}$.
\end{center}
Does the structure  $(\mathbb{R}^{V[r]}, +, \cdot, <, 0, 1, C)$ define $\mathbb{Z} ?$.
\end{question}
\begin{remark}
It is evident that the structure $(\mathbb{R}^{V[r]}, +, \cdot, <, 0, 1, C)$ is not o-minimal, as the set $C$ is not  a finite union of intervals and points.
\end{remark}

\section{Adding many random reals may add many Cohen reals}
It is a well-know fact that forcing with $\MRB \times \MRB$ adds a Cohen real; in fact, if $r_1, r_2$
are the added random reals, then $c=r_1 + r_2$ is Cohen \cite{judah}. This in turn implies all reals $c+a,$ where $a \in \MRB^V,$ are Cohen,
and so, we have continuum many Cohen reals over $V$. However, the sequence $\langle  c+a: a \in \MRB^V   \rangle$
fails to be $\MCB((2^{\aleph_0})^V)$-generic over $V$. In fact, there is no sequence $\langle  c_i: i < \omega_1 \rangle \in V[r_1, r_2]$
of Cohen reals which is $\MCB(\omega_1)$-generic over $V$.

The usual proof of the above fact is based on the characterization of Cohen reals given by Lemma \ref{cohen characterization}.
In this section,
we give a proof of Theorem \ref{adding many} which is direct and avoids any characterization of Cohen reals. As in the classical case, the Steinhaus's lemma plays an essential role in our proof.

A famous theorem of Steinhaus \cite{steinhaus} from $\text{1920}$ asserts that if $A, B \subseteq \mathbb{R}^n$ are measurable sets with positive
Lebesgue measure, then  $A + B$ has an interior point; see also \cite{stromberg}.  Here, we
need a version of Steinhaus theorem  for  the space
$2^{\k \times \omega}$.

For $S, T \subseteq 2^{\k \times \omega}$, set $S+T=\{x+y: x \in S$ and $ y \in T              \}$, where
$x+y: \k \times \omega \to 2$ is defined by
$$(x+y)(\a,n) = x(\a,n)+y(\a,n)~(\text{mod} ~2).$$
Note that the above addition operation is continuous.
\begin{lemma}
	\label{steinhaus lemma 1}
	Suppose $S \subseteq 2^{\k \times \omega}$ is Borel and non-null. Then $S-S$ contains an open set around the zero function $0$.
\end{lemma}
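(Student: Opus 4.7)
The plan is to mimic the classical proof of Steinhaus's theorem in $\mathbb{R}^n$, where one uses a point of Lebesgue density of $S$ to produce an open neighborhood of $0$ inside $S - S$. Here the analogue will be a basic clopen set $\tr{s}$, $s \in \MCB(\kappa)$, in which $S$ is more than half-dense. Concretely, I would first produce such an $s$ with $\mu_\kappa(S \cap \tr{s}) > \tfrac{1}{2}\,\mu_\kappa(\tr{s})$: by outer regularity, cover $S$ by an open set $U$ with $\mu_\kappa(U) < 2\mu_\kappa(S)$, write $U$ as a disjoint countable union of basic clopens (legitimate since, modulo a null set, $S$ depends on only countably many coordinates), and pigeonhole to find one on which $S$ has density greater than $\tfrac{1}{2}$.

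Next, I would set $F = \dom(s)$ and consider the open neighborhood
$$U_0 = \set{z \in 2^{\kappa \times \omega} \st z \restricted F \equiv 0}$$
of the zero function, aiming to show $U_0 \subseteq S - S$. Fix $z \in U_0$. The translation $T_z \colon x \mapsto x + z$ is a self-homeomorphism of $2^{\kappa \times \omega}$, and it preserves $\mu_\kappa$ because on each coordinate it is either the identity or the bit-flip $0 \leftrightarrow 1$, both of which preserve the Bernoulli $(1/2, 1/2)$ factor. Since $z \restricted F \equiv 0$, the clopen $\tr{s}$ is $T_z$-invariant, so $T_z(S \cap \tr{s}) \subseteq \tr{s}$ has the same measure as $S \cap \tr{s}$, namely greater than $\tfrac{1}{2}\mu_\kappa(\tr{s})$. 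Hence $S \cap \tr{s}$ and $T_z(S \cap \tr{s})$ are two subsets of $\tr{s}$ whose measures exceed half of $\mu_\kappa(\tr{s})$ and must therefore meet; picking $y$ in the intersection gives $y \in S$ and $y - z \in S$, whence $z = y - (y - z) \in S - S$ (recalling that addition is mod $2$, so $-z = z$).

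The main technical obstacle is the density step for uncountable $\kappa$. For countable $\kappa$ this is just the Lebesgue density theorem on $2^\omega$. For general $\kappa$ one needs the standard fact that every Borel subset of $2^{\kappa \times \omega}$ agrees modulo a $\mu_\kappa$-null set with a cylinder over a countable subproduct, which reduces the density calculation to the classical case on $2^\omega$. Once this is in hand, the rest is a clean pigeonhole inside $\tr{s}$, and no further properties of random reals are invoked.
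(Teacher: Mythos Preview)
Your argument is correct and follows the classical density-point proof of Steinhaus's theorem, but the paper takes a different and somewhat slicker route, following Stromberg. Instead of finding a basic clopen of high density, the paper first passes to a compact $K \subseteq S$ of positive measure (inner regularity of the Radon measure $\mu_\kappa$), chooses an open $U \supseteq K$ with $\mu_\kappa(U) < 2\mu_\kappa(K)$, and then uses compactness of $K$ together with continuity of addition (a tube-lemma argument) to find an open neighborhood $V$ of $0$ with $V + K \subseteq U$; the same pigeonhole you use then shows $(x+K)\cap K \neq \emptyset$ for every $x \in V$. The advantage of the paper's approach is that it sidesteps the Lebesgue density step entirely, and in particular avoids the reduction to a countable subproduct that you correctly flag as the main technical point for uncountable $\kappa$. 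Your approach has the compensating virtue of producing a very explicit neighborhood $U_0 = \{z : z \restricted \dom(s) \equiv 0\}$, whereas the paper's $V$ is obtained abstractly from compactness. One small comment on your write-up: the sentence ``write $U$ as a disjoint countable union of basic clopens'' is not quite right as stated for uncountable $\kappa$; you should first reduce $S$ to a cylinder over a countable set of coordinates and then carry out the outer-regularity and pigeonhole argument inside that countable subproduct, rather than trying to decompose an arbitrary open set in $2^{\kappa\times\omega}$.
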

\begin{proof}
	We follow \cite{stromberg}. Set $\mu=\mu_\k$ be the product measure on $2^{\k \times \omega}$. As $S$ is Borel and non-null, there is a compact subset of $S$ of positive $\mu$-measure, so may suppose that $S$ itself is compact. Let $U \supseteq S$
	be an open set with $\mu(U) < 2\cdot \mu(S).$
	By continuity of addition, we can find an open set $V$ containing the zero function $0$ such that $V+S \subseteq U.$
	
	We show that $V \subseteq S - S$. Thus suppose $x \in V.$ Then $(x+S) \cap S \neq \emptyset,$ as otherwise we will have
	$(x+S) \cup S \subseteq U,$ and hence $\mu(U) \geq 2\cdot \mu(S),$ which is in contradiction with our choice of $U$.
	Thus let $y_1, y_2 \in S$ be such that $x+ y_1 = y_2.$ Then $x= y_2 - y_1 \in S - S$ as required.
\end{proof}
Similarly, we have the following:
\begin{lemma}
	\label{steinhaus lemma}
	Suppose $S, T \subseteq 2^{\k \times \omega}$ are Borel and non-null. Then $S+T$ contains an open set.
\end{lemma}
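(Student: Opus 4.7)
The plan is to adapt the proof of Lemma~\ref{steinhaus lemma 1} from one set to two, using a single preliminary application of Fubini to reduce to the case where $S$ and $T$ meet in positive measure. Write $\mu$ for $\mu_\k$.

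First, I would use Fubini and the translation-invariance of $\mu$ to compute
\[
\int \mu\bigl((S+x)\cap T\bigr)\,d\mu(x) \;=\; \int\!\!\!\int \mathbf{1}_S(x+y)\,\mathbf{1}_T(y)\,d\mu(y)\,d\mu(x) \;=\; \mu(S)\mu(T) > 0,
\]
so some $c \in 2^{\k \times \omega}$ satisfies $\mu\bigl((S+c)\cap T\bigr) > 0$. Since $S + (T+c) = (S+T) + c$, an open subset of $S+(T+c)$ yields, after translation by $c$, an open subset of $S+T$. Thus I may replace $T$ by $T+c$ and assume henceforth that $\mu(S \cap T) > 0$.

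Next, by inner regularity I would shrink $S$ and $T$ to compact subsets $S_0$ and $T_0$ still satisfying $\mu(S_0 \cap T_0) > 0$. Then, by outer regularity, I would choose an open $U \supseteq S_0 \cup T_0$ with $\mu(U) < \mu(S_0) + \mu(T_0)$; this is possible precisely because $\mu(S_0 \cup T_0) = \mu(S_0) + \mu(T_0) - \mu(S_0 \cap T_0)$ is strictly less than $\mu(S_0) + \mu(T_0)$. Continuity of addition on $2^{\k \times \omega}$ then produces an open neighborhood $V$ of the zero function with $V + S_0 \subseteq U$. The pigeonhole step from Lemma~\ref{steinhaus lemma 1} now closes things: for each $x \in V$, if $(x+S_0) \cap T_0$ were empty then $(x+S_0) \cup T_0 \subseteq U$ would be a disjoint union, forcing $\mu(U) \geq \mu(S_0) + \mu(T_0)$ and contradicting the choice of $U$. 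Hence $(x+S_0)\cap T_0 \neq \emptyset$, so $x \in T_0 - S_0 = S_0 + T_0$ (addition being modulo $2$), and after translating back, $V + c$ is an open subset of $S+T$.

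The main obstacle is really the first step. Lemma~\ref{steinhaus lemma 1} exploits a pigeonhole in which two copies of $S$ must overlap once both sit inside an open set of measure less than $2\mu(S)$; the direct two-set analogue requires fitting $S$ and $T$ inside an open set of measure less than $\mu(S) + \mu(T)$, which is impossible when $S$ and $T$ are essentially disjoint. The Fubini identity, which says that the overlap between $S$ and a translate of $T$ has average value $\mu(S)\mu(T)$, provides a clean way to translate $T$ into a useful position before running the pigeonhole. All other steps are routine adaptations of the single-set argument.
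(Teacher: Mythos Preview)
Your proof is correct. The paper itself offers no argument for this lemma beyond the word ``Similarly,'' so there is nothing to compare against; you have simply supplied the details the authors omitted. You also correctly identified the one place where the single-set argument of Lemma~\ref{steinhaus lemma 1} does not carry over verbatim: the Stromberg pigeonhole needs a translate of $S$ and the set $T$ to sit inside an open set of measure strictly less than $\mu(S)+\mu(T)$, which is impossible when $S$ and $T$ are essentially disjoint. Your Fubini computation $\int \mu((S+x)\cap T)\,d\mu(x)=\mu(S)\mu(T)$ is exactly the missing ingredient, producing a translate that meets $T$ in positive measure; after that reduction the rest is the proof of Lemma~\ref{steinhaus lemma 1} with $T_0$ in place of the second copy of $S$.

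One small expository quibble: once you write ``replace $T$ by $T+c$,'' the symbol $T$ has changed meaning, so the closing phrase ``$V+c$ is an open subset of $S+T$'' is ambiguous about which $T$ is meant. It reads more cleanly to keep the original $T$ throughout, conclude $V \subseteq S_0 + T_0 \subseteq S + (T+c)$, and then note $V+c \subseteq S+T$.
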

Suppose $S, T \subseteq 2^{\k \times \omega}$ are Borel and non-null. It follows from Lemma \ref{steinhaus lemma} that for some $p \in \MCB(\k),$
$\tr{p} \subseteq S+T.$ Thus, by continuity of the addition, we can find
$x \in S$ and $y \in T$ such that:
\begin{itemize}
	\item $(x+y) \upharpoonright \dom(p) = p.$
	\item The sets $S \cap \tr{x \upharpoonright \dom(p)}$ and $T \cap \tr{y \upharpoonright \dom(p)}$ are Borel and non-null.
\end{itemize}
We now complete the proof of Theorem \ref{adding many}.  Thus force with $\MRB(\k) \times \MRB(\k)$
and let $G \times H$ be generic over $V$. Let $\langle \langle r_\a: \a < \k \rangle, \langle  s_\a: \a < \kappa   \rangle \rangle$
be the sequence of random reals added by $G \times H.$

For $\a < \k$ set $c_\a=r_\a+s_\a$. The following completes the proof:
\begin{lemma}
	The sequence $\langle  c_\a: \a < \k      \rangle$ is a sequence of $\kappa$-Cohen reals over $V$.
\end{lemma}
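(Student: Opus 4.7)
The plan is to verify $\MCB(\k)$-genericity of $\langle c_\a : \a < \k\rangle$ directly from the definition: for every dense open $D \subseteq \MCB(\k)$ with $D \in V$, the set of conditions $([S],[T]) \in \MRB(\k) \times \MRB(\k)$ which force some $p \in D$ to be extended by $\langle \dot c_\a : \a < \k\rangle$ is dense. So fix such a $D$ and a condition $([S],[T])$, where $S, T$ are Borel and non-null. Since $c_\a = r_\a + s_\a$ for each $\a$, the whole sequence $\langle c_\a\rangle$ is just the pointwise sum $r+s \in 2^{\k \times \omega}$ of the two generic sequences; so $p$ being extended by $\langle c_\a\rangle$ is the same assertion as $(r+s) \upharpoonright \dom(p) = p$.

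First I would apply Lemma \ref{steinhaus lemma} to $S$ and $T$ to obtain some $p_0 \in \MCB(\k)$ with $\tr{p_0} \subseteq S+T$. Using density of $D$, pick $p \leq p_0$ with $p \in D$; then $\tr{p} \subseteq \tr{p_0} \subseteq S+T$. Next, apply the refinement observation recorded immediately after Lemma \ref{steinhaus lemma} to this $p$: it produces $x \in S$ and $y \in T$ with $(x+y) \upharpoonright \dom(p) = p$ such that
\[
S' = S \cap \tr{x \upharpoonright \dom(p)}, \qquad T' = T \cap \tr{y \upharpoonright \dom(p)}
\]
are both Borel and non-null.

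With this refined condition $([S'],[T']) \leq ([S],[T])$, any pair $(r^*,s^*) \in S' \times T'$ satisfies $r^* \upharpoonright \dom(p) = x \upharpoonright \dom(p)$ and $s^* \upharpoonright \dom(p) = y \upharpoonright \dom(p)$, and therefore $(r^* + s^*) \upharpoonright \dom(p) = p$. Translated into names, this reads $([S'],[T']) \Vdash \dot c_\a(n) = p(\a,n)$ for every $(\a,n) \in \dom(p)$, and $p$ lies in $D$. This establishes the desired density, hence $\MCB(\k)$-genericity of $\langle c_\a : \a < \k\rangle$ over $V$.

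The substantive content of the argument is carried entirely by Lemma \ref{steinhaus lemma} and the refinement observation following it; no characterization of Cohen reals via meager sets (such as Lemma \ref{cohen characterization}) is invoked, in line with the direct approach promised at the start of the section. The only genuinely nontrivial step is the refinement observation itself, which relies on continuity of coordinatewise addition on $2^{\k \times \omega}$ together with a Fubini-type argument ensuring that the two basic refinements $S \cap \tr{x \upharpoonright \dom(p)}$ and $T \cap \tr{y \upharpoonright \dom(p)}$ can be chosen simultaneously non-null. Everything else is bookkeeping.
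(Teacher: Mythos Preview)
Your proof is correct and follows the paper's approach: reduce to a density statement in $\MRB(\k)\times\MRB(\k)$ and use the Steinhaus lemma together with the refinement observation to land below a member of $D$. The only difference is that the paper applies the Steinhaus/refinement package twice (once to obtain an initial $p$, then again after choosing $q\in D$ below it), whereas you streamline to a single pass by choosing $p\in D$ directly below the Steinhaus condition $p_0$; both versions rest on the same point you flag in your last paragraph, namely that the refinement observation remains valid for every $p\le p_0$.
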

\begin{proof}
	It suffices to prove the following:
	
	$\hspace{1.5cm}$ For every $([S], [T])\in {\MRB}(\k) \times \MRB(\k)$,
	and every open dense subset $D\in V$

	$(*)$ $\hspace{0.9cm}$  of ${\MCB}(\k)$,~ there is
	$([\ov{S}], [\ov{T}])\leq ([S], [T])$ such~ that $([\ov{S}], [\ov{T}]) \vdash
	``  \lan \lusim{c}_\a : \a\in \k\ran$
	
	$\hspace{1.5cm}$  extends some element of $D$''.
	
	Thus fix $([S], [T])\in {\MRB}(\k) \times \MRB(\k)$ and $D \in V$ as above, where $S, T \subseteq 2^{\k \times \omega}$ are Borel and non-null.
	By Lemma \ref{steinhaus lemma} and the remarks after it, we can find
	$p \in \MCB(\k)$ and $(x, y) \in S \times T$ such that:
	\begin{enumerate}
		\item $\tr{p} \subseteq S+T.$
		\item $(x+y) \upharpoonright \dom(p) = p.$
		\item The sets $S \cap \tr{x \upharpoonright \dom(p)}$ and $T \cap \tr{y \upharpoonright \dom(p)}$ are Borel and non-null.
	\end{enumerate}
	Now let $q \in D$ be such that
	\begin{center}
		$([S \cap \tr{x \upharpoonright \dom(p)}], [T \cap \tr{y \upharpoonright \dom(p)}]) \Vdash$``$q \leq_{\MCB(\k)} p$''.
	\end{center}
	Using continuity of the addition and further application of
	Lemma \ref{steinhaus lemma} and the remarks after it, we can find
	$x', y' $ such that:
	\begin{enumerate}
		\item [(4)] $x' \in S \cap \tr{x \upharpoonright \dom(p)}$ and $y' \in T \cap \tr{y \upharpoonright \dom(p)}.$
		\item [(5)] $(x'+y') \upharpoonright \dom(q) = q.$
		\item [(6)] The sets $S \cap \tr{x' \upharpoonright \dom(q)}$ and $T \cap \tr{y' \upharpoonright \dom(q)}$ are Borel and non-null.
	\end{enumerate}
	It is now clear that
	\begin{center}
		$([S \cap \tr{x' \upharpoonright \dom(q)}], [T \cap \tr{y' \upharpoonright \dom(q)}]) \Vdash$``$ \lan \lusim{c}_\a : \a\in \k\ran$ extends $q$''.
	\end{center}
	The result follows.
\end{proof}

Will Brian,
Department of Mathematics and Statistics University of North Carolina at Charlotte Charlotte,
NC 28223-0001, USA

E-mail address: wbrian.math@gmail.com

URL: http://wrbrian.wordpress.com

Mohammad Golshani,
School of Mathematics, Institute for Research in Fundamental Sciences (IPM), P.O. Box:
19395-5746, Tehran-Iran.

E-mail address: golshani.m@gmail.com

URL: http://math.ipm.ac.ir/golshani/

\end{document}